\theoremstyle{plain}
\newtheorem{theorem}{Theorem} 
\newtheorem{thm}{Theorem}[subsection] 
\newtheorem{proposition}[thm]{Proposition}
\theoremstyle{definition}
\newtheorem{definition}[thm]{Definition}
\theoremstyle{remark}
\newtheorem{remark}[thm]{Remark}
\numberwithin{equation}{section}
\newcommand{\add}{\operatorname{add}}
\newcommand{\End}{\operatorname{End}}
\newcommand{\supp}{\operatorname{supp}}
\newcommand{\indice}{\operatorname{index}}
\newcommand{\Hom}{\operatorname{Hom}}
\newcommand{\Ext}{\operatorname{Ext}}
\newcommand{\coh}{\operatorname{coh}}
\newcommand{\rad}{\operatorname{rad}}
\newcommand{\Lmod}[1]{#1\!\operatorname{-mod}}
\newcommand{\gld}{\operatorname{gl.dim}}
\newcommand{\ddim}{\underline{\dim}}
\newcommand{\obj}{\operatorname{obj}}
\newcommand{\opp}{{\mathrm{op}}}  
\newcommand{\ZZ}{\mathbb{Z}}
\newcommand{\QQ}{\mathbb{Q}}
\newcommand{\T}{\mathbb{T}}
\newcommand{\XX}{\mathbb{X}}
\newcommand{\cC}{\mathcal{C}}
\newcommand{\cJ}{\mathcal{J}}
\newcommand{\cP}{\mathcal{P}}
\newcommand{\cQ}{\mathcal{Q}}
\newcommand{\cT}{\mathcal{T}}
\newcommand{\CQ}[1]{K\langle\hspace{-0.05cm}\langle #1\rangle\hspace{-0.05cm}\rangle}
\newcommand{\bil}[1]{\langle #1\rangle}   
\newcommand{\ebrace}[1]{\langle #1 \rangle} 
\newcommand{\Lam}{\Lambda}
\newcommand{\gam}{\gamma}
\newcommand{\lam}{\lambda}
\begin{document}

\title{Tubular Jacobian Algebras}
\author{Christof Geiss}
\address{Instituto de Matemáticas, UNAM, Ciudad Universitaria, C.P. 
04510 México D.F., MEXICO}
\email{christof@matem.unam.mx}
\thanks{Both authors acknowledge partial support from CONACYT Grant No.  81948}

\author{Raúl González-Silva}
\email{rulo65@ciencias.unam.mx}
\thanks{R.G. was supported by a Ph.D. grant from CONACYT}


\begin{abstract}
We show that the endomorphism ring of each cluster tilting object in a tubular 
cluster category is a finite dimensional Jacobian algebra which is tame of 
polynomial growth. Moreover, these Jacobian algebras are given by a quiver
with a non-degenerate potential and mutation of cluster tilting objects is
compatible with mutation of QPs.
\end{abstract}

\maketitle

\section{Introduction}
 Tubular cluster algebras were introduced in \cite{BACH} as a particular 
class of mutation finite cluster  algebras. Their common feature is that they 
admit an additive categorification by a tubular cluster category of the
corresponding type.   
Recall  that tubular cluster categories are by definition of the form 
$\mathcal{C}_{\XX}=\mathcal{D}^{b}(\coh(\XX))/\ebrace{\tau^{-1}[1]}$, 
where $\coh{\XX}$ is the category of coherent sheaves on a weighted projective 
line of tubular weight type i.e. 
$(2,2,2,2;\lambda), (3,3,3), (4,4,2)$ and $(6,3,2)$, see~\cite{GeLe}.
It follows from~\cite{Ke} that $\mathcal{C}_{\mathbb{X}}$ is a triangulated 
$2$-Calabi-Yau category which admits a cluster structure~\cite{BaKuLe}.
Moreover, in these cluster categories  the indecomposable
rigid objects are in bijection with the positive real Schur roots of the
corresponding elliptic root system. Via the cluster character indecomposable 
rigid objects are in bijection with cluster variables~\cite{BACH}.

It is not hard to derive from known (non-trivial) results that in this 
situation the endomorphism ring of each cluster tilting object can be
described as Jacobian algebra via an rather explicit quiver with potential, 
and the mutation of cluster tilting objects is compatible with the mutation of 
QPs in the sense of~\cite{DWZ1}. See Section~\ref{ssec:TubJac} for precise 
statements and proofs. In particular, the fact that for tubular cluster 
categories the exchange graph of cluster tilting objects is 
connected~\cite[Prp.~8.6]{BaKuLe}, makes the statements~(b) and~(c) of 
Proposition~\ref{prp:tubjac} straightforward and convenient.

Finally, recall from~\cite[Section~2.1]{KeRe} that for a cluster tilting object
$T\in\cC_\XX$ we have 
$\Lmod{\End_\cC(T)^\opp}\cong \cC/\add{T}$. 
In particular, the Auslander-Reiten quiver of these algebras consists just of
tubular families of the corresponding weight type, with all but finitely many
tubes being stable. 

Since the tubular cluster categories are orbit categories of derived tame 
categories in the sense of~\cite{GeKr}, this suggests strongly that all these 
algebras are tame.  In fact, this is our main result:

\begin{theorem} \label{thm:main}
Let $K$ be an algebraically closed field, $\cC_{\XX}$
a tubular cluster category over $K$ as above and $T\in\cC_{\XX}$
a basic cluster tilting object, then the endomorphism ring
$\End_{\cC_{\XX}}(T)$ is a finite dimensional Jacobian algebra which is
tame of polynomial growth.
\end{theorem}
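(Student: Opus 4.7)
The Jacobian-algebra structure and the finite-dimensionality of $\End_{\cC_\XX}(T)$ are already provided by Proposition~\ref{prp:tubjac}; the substance of Theorem~\ref{thm:main} is the assertion of tameness of polynomial growth. My plan is to reduce this to the known geometry of $\coh\XX$ by passing through the Keller--Reiten equivalence $\Lmod{\End_\cC(T)^\opp}\cong\cC_\XX/\add T$, rather than attempting a direct combinatorial analysis of each QP mutation class separately.

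First, I would recall the classification of indecomposables in $\cC_\XX$: via the Galois cover $\mathcal{D}^b(\coh\XX)\to\cC_\XX=\mathcal{D}^b(\coh\XX)/\bil{\tau^{-1}[1]}$ they organise into a $\QQ\cup\{\infty\}$-indexed collection of tubular families of the prescribed weight type, inherited from the slope decomposition of $\coh\XX$. For each slope $q$ one has a $\mathbb{P}^1$-family of tubes (finitely many exceptional of the prescribed ranks, the rest homogeneous), and at level $n$ in each homogeneous tube the indecomposables move in an algebraic $\mathbb{P}^1$-parametrised family, arising from the moduli of indecomposable coherent sheaves on $\XX$. Since $\add T$ removes only finitely many objects, this picture transports essentially verbatim to $\cC_\XX/\add T\cong \Lmod{\End_\cC(T)^\opp}$.

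Second, to actually witness tameness in the sense of Crawley-Boevey and Drozd, I would upgrade each such $\mathbb{P}^1$-family of indecomposables to a $K[x]$-$\End_\cC(T)^\opp$-bimodule of finite rank over $K[x]$ whose generic specialisations are the indecomposables in question. On the coherent-sheaf side this is available via the universal extension family over $\mathbb{P}^1$ for each homogeneous tube; pushing it through the orbit quotient and then through the functor $\Hom_{\cC_\XX}(T,-)\colon\cC_\XX\to\Lmod{\End_\cC(T)^\opp}$ should give the required $K[x]$-family of modules. The finitely many exceptional tubes contribute only isolated indecomposables and can be absorbed into the discrete part of the tame classification. A uniform estimate on $\dim_K\Hom_{\cC_\XX}(T,X)$ for $X$ at level $n$ of a tube of slope $q$, combined with the standard polynomial count of $(q,n)$ with bounded total dimension, then yields \emph{polynomial} growth.

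The main technical obstacle is this second step: converting the abstract $\mathbb{P}^1$-parametrisation of indecomposables into an honest flat algebraic family of $\End_\cC(T)^\opp$-modules. One has to keep track of the behaviour of the moduli families under the $\ZZ$-action generated by $\tau^{-1}[1]$ and verify that the $\Hom_{\cC_\XX}(T,-)$-image of the universal extension sheaf over $\mathbb{P}^1$ remains generically locally free. Once this is in place, the conclusion holds uniformly over all cluster tilting objects $T$, so the connectivity of the exchange graph \cite[Prp.~8.6]{BaKuLe} together with the mutation compatibility in Proposition~\ref{prp:tubjac}(c) functions as a consistency check on the argument rather than as its central mechanism.
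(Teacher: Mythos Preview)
Your route is genuinely different from the paper's, and the step you flag as the ``main technical obstacle'' is a real gap, not a formality.

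The paper does not try to push algebraic families through $\Hom_{\cC_\XX}(T,-)$. Instead: by Proposition~\ref{prp:tubjac} and \cite[Thm.~3.6]{GeLaSc}, QP-mutation preserves tameness (and, with a small refinement noted at the end of the proof, polynomial growth), so it suffices to check a single representative for each of the four tubular types. For each of the QPs in Figure~\ref{f:0} the paper uses the natural $\ZZ$-grading to build a Galois covering of the Jacobian algebra (Section~\ref{sec:Gal}), then verifies by explicit computation with Coxeter transformations (Section~\ref{sec:ItTubCov}) that this covering is iterated tubular in the sense of de~la~Pe\~na--Tom\'e, hence locally support-finite and tame of polynomial growth. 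Theorem~\ref{DS} then transfers tameness down to the orbit algebra. So the connectivity of the exchange graph and mutation-invariance of representation type are the \emph{central mechanism} of the paper's proof, not a consistency check as you suggest.

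Your approach, if it went through, would be uniform in $T$ and would bypass both the mutation-invariance input from \cite{GeLaSc} and the case-by-case covering analysis. But the equivalence $\cC_\XX/\add T\simeq\Lmod{\End_\cC(T)^\opp}$ is an equivalence of additive categories, not of moduli problems: knowing the isoclasses of indecomposables is not the same as exhibiting the $K[x]$-bimodules that Drozd's definition demands. To get these you need $\dim_K\Hom_{\cC_\XX}(T,X)$ to be constant as $X$ runs over each $\mathbb{P}^1$-family of sheaves, and you need the resulting family of $\End_\cC(T)$-modules to be flat over the base; neither is automatic, and you have not supplied the argument. The paper's Galois-covering approach sidesteps this entirely by working with locally bounded $K$-categories throughout, where the Dowbor--Skowro\'nski machinery applies directly.
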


Our strategy is as follows:  by Proposition~\ref{prp:tubjac}
all the tubular Jacobian algebras of a given tubular type
are given explicitly by non-degenerate QPs and  connected via QP-mutations. 
Since the representation type of Jacobian algebras is 
preserved under mutation~\cite{GeLaSc}, it is sufficient to show for each 
tubular type that the endomorphism ring of a single cluster 
tilting object, is tame (of polynomial growth).

We display in Figure~\ref{f:0} the list of representatives for which
we are going to show tameness explicitly. More precisely,
$(Q^{(1)},W^{(1)})$ corresponds to the tubular type $(3,3,3)$;
$(Q^{(2)}, W^{(2)}_\lam)$ corresponds to the tubular type $(2,2,2,2;\lam)$, 
$(Q^{(3)}, W^{(3)})$ corresponds to tubular type $(4,4,2)$ and
$(Q^{(4)}, W^{(4)})$ corresponds to tubular type $(6,3,2)$. 

For each of these representatives we provide
a Galois covering coming from the natural $\mathbb{Z}$-grading, which turns
out to be iterated tubular in the sense of de la Peña-Tomé.
This provides in each case a quite explicit description of the 
indecomposables of this covering, see Section~\ref{ssec:TubIt}. Since iterated
tubular algebras are locally support finite, we obtain also a precise
description of the indecomposables of our tubular Jacobian algebra by
a well-known result of Dowbor and Skowroński, see~Theorem~\ref{DS}.

In fact, we see from the description in Proposition~\ref{prp:tubjac}~(a) 
that all tubular Jacobian algebras admit a Galois covering without
oriented cycles. We conjecture that this covering is always iterated tubular. 
This is easy to verify directly for
the tubular type $(2,2,2,2;\lam)$ since there are only four families of 
Jacobian algebras of this type. See also Remark~\ref{rem:tubjac}~(2) for
more information about this slightly unusual case.
However, for type $(6,3,2)$ there are a priory several thousand cases
to be analyzed.

    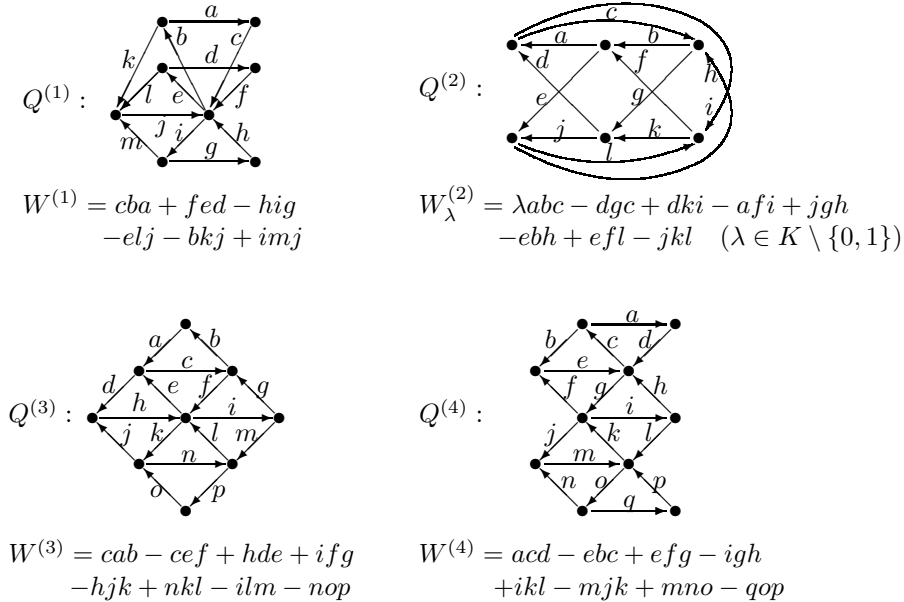
\begin{figure}[ht]
      \setlength{\unitlength}{0.62mm}
      \begin{picture}(120,140)
        \put(60,80){
          \put(-35,45){\circle*{2}} 
          \put(-55,45){\circle*{2}}
          \put(-35,35){\circle*{2}}
          \put(-55,35){\circle*{2}}
          \put(-35,15){\circle*{2}}
          \put(-55,15){\circle*{2}}
          \put(-45,25){\circle*{2}}
          \put(-65,25){\circle*{2}}
          \multiput(-36,44)(-20,0){2}{\vector(-1,-2){8.7}}
          \multiput(-36,34)(-20,0){2}{\vector(-1,-1){8}}
          \multiput(-36,16)(-10,10){2}{\vector(-1,1){8}}
          \multiput(-46,24)(-10,10){2}{\vector(-1,-1){8}}
          \multiput(-56,16)(-20,0){1}{\vector(-1,1){8}}
          \multiput(-45.8,26)(-20,0){1}{\vector(-1,2){8.8}}
          \multiput(-53.5,45)(0,-10){2}{\vector(1,0){17}}
          \multiput(-63.5,25)(10,-10){2}{\vector(1,0){17}}  
          \put(-46,46){$a$}
          \put(-52.2,40){$b$}
          \put(-40.5,40){$c$}
          \put(-46,36){$d$}
          \put(-53,27.5){$e$}
          \put(-39.6,27.5){$f$}
          \put(-46,16.7){$g$}
          \put(-39.4,19.5){$h$}
          \put(-52.4,19.5){$i$}
          \put(-56.5,21.3){$j$}
          \put(-63.8,35){$k$}
          \put(-58.8,27.5){$l$}            
          \put(-64.2,17.6){$m$}            
          \put(-85,4){$W^{(1)}=cba+fed-hig$}
          \put(-68,-3){$-elj-bkj+imj$}
          \put(-85,26.4){$Q^{(1)}:$}

          \put(60,40){\circle*{2}} 
          \put(60,20){\circle*{2}}
          \put(40,40){\circle*{2}}
          \put(40,20){\circle*{2}}
          \put(20,40){\circle*{2}}
          \put(20,20){\circle*{2}}
          \multiput(58,40)(-20,0){2}{\vector(-1,0){16}}
          \multiput(58,20)(-20,0){2}{\vector(-1,0){16}}
          \multiput(58.5,38.5)(-20,0){2}{\vector(-1,-1){17}}
          \multiput(58.5,21.5)(-20,0){2}{\vector(-1,1){17}}
          \qbezier(21.5,41.4)(40,49)(58.5,41.4)
          \put(53.4,43.1){\vector(3,-1){5.7}}
          \qbezier(20.5,42)(42.5,54)(60.5,45)
          \qbezier(60.5,45)(73,37)(61.5,21.8)
          \put(65,27){\vector(-2,-3){3.6}}
          \qbezier(21.5,18.6)(40,11)(58.5,18.6)
          \put(53.4,16.8){\vector(3,1){5.7}}
          \qbezier(20.5,18)(42.5,6)(60.5,15)
          \qbezier(60.5,15)(73,23)(61.5,38.2)
          \put(65,33){\vector(-2,3){3.6}}
          \put(29,40.3){$a$}
          \put(49,40.3){$b$}
          \put(40,45.5){$c$}
          \put(25,35){$d$}
          \put(25,27.2){$e$}
          \put(46,35){$f$}
          \put(45.5,28.2){$g$}
          \put(61,32){$h$}
          \put(61.3,25){$i$}
          \put(29,20.3){$j$}
          \put(49,20.3){$k$}
          \put(40,15.2){$l$}
          \put(0,29){$Q^{(2)}:$}
          \put(0,4){$W^{(2)}_\lam=\lambda abc-dgc+dki-afi+jgh$}
          \put(16.8,-3){$-ebh+efl-jkl\quad(\lambda\in K\setminus\{0,1\})$}

          \put(-50,-40){\circle*{2}}
          \put(-70,-40){\circle*{2}}
          \put(-30,-40){\circle*{2}}
          \put(-40,-30){\circle*{2}}
          \put(-60,-30){\circle*{2}}
          \put(-50,-20){\circle*{2}}
          \put(-40,-50){\circle*{2}}
          \put(-60,-50){\circle*{2}}
          \put(-50,-60){\circle*{2}}
          \multiput(-51,-39)(10,10){2}{\vector(-1,1){8}}
          \multiput(-41,-49)(10,10){2}{\vector(-1,1){8}}
          \multiput(-61,-49)(10,-10){2}{\vector(-1,1){8}}
          \multiput(-61,-31)(10,10){2}{\vector(-1,-1){8}}
          \multiput(-51,-41)(10,10){2}{\vector(-1,-1){8}} 
          \multiput(-41,-51)(10,10){2}{\vector(-1,-1){8}}
          \multiput(-68.4,-40)(20,0){2}{\vector(1,0){17}}
          \multiput(-58.4,-50)(0,20){2}{\vector(1,0){17}}
          \put(-58,-24.8){$a$}
          \put(-45,-24.8){$b$}
          \put(-51,-29.4){$c$}
          \put(-68,-34.8){$d$}
          \put(-54,-34.8){$e$}
          \put(-47.5,-34.8){$f$}
          \put(-34.9,-34.8){$g$}
          \put(-61.6,-38.6){$h$}
          \put(-41,-39.5){$i$}
          \put(-64,-45){$j$}
          \put(-57.7,-45){$k$}
          \put(-44.6,-45){$l$}
          \put(-39.4,-45){$m$}
          \put(-51,-49.4){$n$}
          \put(-57.7,-57){$o$}
          \put(-44.4,-57){$p$}
          \put(-88,-71){$W^{(3)}=cab-cef+hde+ifg$}
          \put(-75,-78){$-hjk+nkl-ilm-nop$}
          \put(-88,-41){$Q^{(3)}:$}
          \put(35,-40){\circle*{2}}
          \put(55,-20){\circle*{2}}
          \put(55,-40){\circle*{2}}
          \put(45,-30){\circle*{2}}
          \put(25,-30){\circle*{2}}
          \put(35,-20){\circle*{2}}
          \put(45,-50){\circle*{2}}
          \put(25,-50){\circle*{2}}
          \put(35,-60){\circle*{2}}
          \put(55,-60){\circle*{2}}
          \multiput(34,-41)(0,20){2}{\vector(-1,-1){8}}
          \multiput(44,-51)(0,20){2}{\vector(-1,-1){8}}
          \multiput(54,-41)(0,20){2}{\vector(-1,-1){8}}
          \multiput(34,-39)(0,-20){2}{\vector(-1,1){8}}
          \multiput(44,-49)(0,20){2}{\vector(-1,1){8}} 
          \multiput(54,-39)(0,-20){2}{\vector(-1,1){8}}
          \multiput(37,-40)(0,20){2}{\vector(1,0){16}}
          \multiput(27,-50)(0,20){2}{\vector(1,0){16}}
          \multiput(37,-60)(0,0){2}{\vector(1,0){16}}
          \put(44.3,-19.4){$a$}
          \put(27,-25){$b$}
          \put(40.2,-25){$c$}
          \put(47,-25){$d$}
          \put(33.8,-29.4){$e$}
          \put(30.8,-35){$f$}
          \put(37.5,-35){$g$}
          \put(50.1,-35){$h$}
          \put(44.3,-39.2){$i$}
          \put(27,-45){$j$}
          \put(40.2,-45){$k$}
          \put(48.4,-45){$l$}
          \put(33,-49.3){$m$}
          \put(30.4,-55){$n$}
          \put(37.6,-55){$o$}
          \put(50.1,-55){$p$}
          \put(43.8,-59.2){$q$}
          \put(0,-71){$W^{(4)}=acd-ebc+efg-igh$}
          \put(16,-78){$+ikl-mjk+mno-qop$}

          \put(0,-41){$Q^{(4)}:$}
        }     
      \end{picture}
\caption{Representatives of tubular QPs}
\label{f:0}
    \end{figure}

\section{Preliminaries}
\subsection{Grothendieck group}
Let $K$ be an algebraically closed field.
\begin{definition}
Let $A$ be a tame (representation-infinite) connected and hereditary 
$K$-algebra and $_AT$ a preprojective tilting module. 
The algebra $B:=\End(_AT)$ is called a tame concealed algebra.
\end{definition}
    
Let $B$ be a tame concealed algebra, $K_0(B)$ its Grothendieck group and $C_B$ 
its Cartan matrix~\cite[2.2.4]{RIN}. $K_0(B)$ is a free abelian group
which has a natural basis consisting of the classes of the simple $B$-modules
$S_1, S_2,\ldots S_n$. Via this basis we identify $K_0(B)$ with $\ZZ^n$.
Consequently, we identify the dimension vector $\ddim M\in\ZZ^n$ of a finite 
dimensional $B$-module $M$ with its class in $K_0(B)$. 
For typographic reasons we interpret the elements of $\ZZ^n$ as row vectors.
In this setting, 
the Cartan matrix $C_B$ is invertible and we consider Ringel's bilinear form 
$\bil{\_ \phantom{,},\_}$ on $K_0(B)$ given by
\[
\bil{x,y}=xC_B^{-T}y^T.
\]
Recall that we have
$\bil{\ddim M,\ddim N} = \sum_{i=0}^2(-1)^i\dim \Ext^i_B(M,N)$ for
all finite dimensional $B$-modules $M$ and $N$ since $\gld(B)\leq 2$.
We denote by  $\chi_B$ the quadratic form given by 
$\chi_B(x)=\bil{x,x}$. It is well known that $\chi_B$ is non-negative. 
Thus the radical of $\chi_B$ is the subgroup $\{r\in K_0(B)\mid \chi_B(r)=0\}$.
It is well-known that the $\chi_B$ has corank 1,  i.e. the radical of $\chi_B$ 
has rank $1$.

For our purpose an indecomposable $B$-module $M$ is \emph{regular}, if
$\tau^i M\neq 0$ for all $i\in\ZZ$.

Finally $\Phi:=-C_B^{-T}C_B$ is the Coxeter transformation  on $K_0(B)$.

\begin{remark}
Since $\gld(B)\leq 2$, the matrix $C_B^{-T}$ codifies arrows and minimal 
relations for $B$, i.e. $(C_B^{-T})_{i,j}$ is equal to the number of relations 
starting at $j$ and ending at $i$ minus the number of arrows starting at
$j$ and ending at $i$.
\end{remark}
    
 We collect from~\cite{RIN} the following well-known results:

 \bigskip
 \begin{proposition}\label{ringel}
 Let $B$ be a tame concealed algebra, then:
 \begin{asparaenum}[a)]                   
\item 
There exists unique positive generator $h$ of the radical of $\chi_B$, i.e. 
$h=\ddim R$ for $R$ an indecomposable (regular) $B$-module and 
$\rad \chi_B=\ZZ h$. Then an
indecomposable  $B$-module $M$ is regular if and only if 
$\bil{h,\ddim M}=0$.
\item 
$\ddim(\tau M)=(\ddim M)\Phi$, for any indecomposable regular $B$-module, 
where $\tau$ is the AR-translate. 
\item 
An indecomposable regular $B$-module $M$ is simple regular if\newline 
$\displaystyle{\sum_{i=0}^{m-1}}(\ddim\tau^iM)=h$, 
where $m$ is the $\tau$-period of $M$.
\end{asparaenum}      
\end{proposition}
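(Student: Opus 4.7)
The plan is to deduce all three assertions from the classical structure theory of tame concealed algebras~\cite{RIN}, using as input the following basic facts: $\chi_B$ is a non-negative unit form of corank one equivalent to the Tits form of an extended Dynkin diagram; the module category of $B$ consists of a preprojective component, a preinjective component, and a separating tubular $\mathbb{P}^1_K$-family of stable tubes lying between them; and the Coxeter transformation $\Phi$ computes $\tau$ on dimension vectors of indecomposables with no projective direct summand.

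For (a) the Tits forms of $\tilde{A}_n$, $\tilde{D}_n$, $\tilde{E}_{6,7,8}$ each admit a unique positive radical generator (the minimal imaginary root), and transferring across the equivalence to $\chi_B$ yields a unique positive $h\in\operatorname{rad}\chi_B$; this $h$ is realised as the dimension vector of the indecomposable of quasi-length $m$ in a rank-$m$ tube, which is established simultaneously with (c) below. To characterise regularity via $\bil{h,\ddim M}=0$ I would introduce Ringel's defect $\partial(M):=\bil{h,\ddim M}$, which is additive on short exact sequences because $h$ is radical for $\bil{\_\phantom{,},\_}$; a direct computation on indecomposable projectives and injectives gives $\partial<0$ on the preprojective component and $\partial>0$ on the preinjective component, whereas $\partial=0$ on every tube, since $\partial$ is $\tau$-invariant and tubes are periodic. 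Since these three classes partition the indecomposables, the equivalence follows.

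For (b) I would use that any regular indecomposable $M$ has neither projective nor injective direct summand, so its minimal projective presentation $P_1\to P_0\to M\to 0$ is honest and $\tau M=D\operatorname{Tr} M$ is nonzero. Expressing $\ddim P_0,\ddim P_1$ through $C_B$ and passing to a minimal injective copresentation of $\tau M$ via the Nakayama functor, one obtains $\ddim\tau M=-(\ddim M)C_B^{-T}C_B=(\ddim M)\Phi$; this is the standard derivation, valid whenever $\gld B\le 2$.

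For (c) I set $s:=\sum_{i=0}^{m-1}\ddim\tau^iM$. By (b) one has $s\Phi=s$, and a case-by-case check on the extended Dynkin types (or an invocation of the corresponding statement in~\cite{RIN}) identifies the $\Phi$-fixed subgroup of $K_0(B)$ with $\ZZ h=\operatorname{rad}\chi_B$; positivity forces $s=kh$ for a unique $k\ge 1$. To pin down $k$ I would use the structure of a stable tube of rank $m$: if $E_1,\ldots,E_m$ denote the mouth modules, then any indecomposable in the tube of quasi-length $\ell$ has dimension vector equal to the sum of $\ell$ cyclically consecutive $\ddim E_j$'s, so summing over one $\tau$-orbit each $\ddim E_j$ appears exactly $\ell$ times, giving $s=\ell(\ddim E_1+\cdots+\ddim E_m)=\ell h$. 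Hence $s=h$ iff $\ell=1$, which is precisely simple regularity. The main obstacle is conceptual rather than computational: the argument relies on the separating property of the tubular family (for (a)) and on the explicit mouth/quasi-length structure of a stable tube together with the identification of $\ker(\Phi-\mathrm{id})$ (for (c)), both of which are standard in Ringel's framework, so the whole proof is an assembly of well-known pieces.
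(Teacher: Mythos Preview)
Your overall strategy matches the paper's: all three parts are deduced from Ringel's structure theory, with the paper simply citing \cite[Thm.~4.3(3)]{RIN} for (a), \cite[2.4(4)]{RIN} for (b), and reducing (c) via tilting to the hereditary case where it is checked directly in \cite[Sec.~6]{DRIN}. Your arguments for (a) and (c) are correct; in fact your treatment of (c) is somewhat more self-contained than the paper's, since you work directly with the quasi-length filtration in a stable tube rather than tilting back to a hereditary algebra and invoking the Dlab--Ringel tables. (Incidentally, the identification $\ker(\Phi-\mathrm{id})=\rad\chi_B$ that you use does not require a case-by-case check: from $\Phi=-C_B^{-T}C_B$ one gets $x\Phi=x \iff xC_B^{-T}(C_B+C_B^T)=0 \iff x(C_B^{-T}+C_B^{-1})=0$, which is exactly the radical condition.)

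There is, however, a genuine slip in your argument for (b). The identity $\ddim\tau M=(\ddim M)\Phi$ is \emph{not} ``valid whenever $\gld B\le 2$''. The standard derivation you sketch needs two ingredients: that the minimal projective presentation $P_1\to P_0\to M\to 0$ is exact on the left (i.e.\ $\operatorname{pd} M\le 1$, so that $\ddim M=\ddim P_0-\ddim P_1$), and that $\nu M=D\Hom_B(M,B)=0$ (so that $0\to\tau M\to\nu P_1\to\nu P_0\to 0$ is exact). Neither follows from $\gld B\le 2$ alone. The paper addresses precisely this point: it invokes \cite[3.1(5)]{RIN} for the fact that regular modules over a tame concealed algebra have projective dimension $\le 1$, and the vanishing of $\Hom_B(M,B)$ comes for free because the indecomposable projectives are preprojective while $M$ is regular. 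You should insert this justification in place of the blanket ``$\gld B\le 2$'' claim.
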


For $\emph{(a)}$  see~\cite[Thm. 4.3 (3)]{RIN}. 
$\emph{(b)}$ follows from \cite[2.4 (4)]{RIN}
since the regular modules over a tame concealed algebra have projective 
dimension 1 \cite[3.1 (5)]{RIN}.  
(c) follows with (b) by tilting from the corresponding statement about 
regular modules over tame hereditary algebras. In this case it can be verified 
by direct inspection, see for example \cite[sec. 6]{DRIN}.

\subsection{Tubular and iterated tubular algebras} \label{ssec:TubIt}
A \emph{tubular algebra} is a tubular extension of a tame concealed algebra of 
extension type $(2,2,2,2), (3,3,3), (4,4,2)$ or $(6,3,2)$ in the sense 
of~\cite[page 230]{RIN}. Note that the global dimension of a tubular algebra 
is always $2$, see~\cite{RIN}.

Let $\mathbb{T}\in\{(2,2,2,2),(3,3,3),(4,4,2),(6,3,2)\}$ and let $A$ be a 
tubular algebra of tubular type $\T$. By definition, it can be viewed as an 
extension of a tame concealed algebra $A_0$ and also as a coextension of a tame
concealed algebra $A_{\infty}$ \cite[pages 268-269]{RIN}.
    
Let $h_0$, respectively $h_{\infty}$, be the positive radical generator of 
$K_0(A_0)$, res\-pectively of $K_0(A_{\infty})$. If $M$ is an indecomposable 
$A$-module, we define:
\[
\indice(M)=-\frac{\bil{h_0,\ddim M}_A}{\bil{h_{\infty},\ddim M}_A}
\in\QQ\cup\{\infty\}
\]
For $\gamma\in\QQ_{\,>0}$, let $\mathcal{T}_{\gamma}$ be the module class 
given by all the indecompo\-sable $A$-modules with index $\gamma$. 
The module class $\mathcal{T}_{\gamma}$ is a sincere stable 
tubular $\mathbb{P}^1(k)$-family of type $\T$. \cite[Thm. 5.2 (2)]{RIN}

The following theorem~\cite[Thm. 5.2 (4)]{RIN} describes the structure 
of $\Lmod{A}$.

\begin{thm}[Ringel]
Let $A$ be a tubular algebra of type $\T$. Then $\Lmod{A}$ has the following 
components: a preprojective component
$\mathcal{P}_0$ (the same as for $A_0$), 
for each $\gamma\in\QQ_{\,\geq 0}\cup\{\infty\}$ a 
separating (\cite[sec. 3.1]{RIN}) tubular $\mathbb{P}^1(k)$-family 
$\mathcal{T}_\gamma$, all but $\mathcal{T}_0$ and $\mathcal{T}_\infty$ being 
stable of type $\T$, and a preinjective component $\mathcal{Q}_\infty$.

The following figure helps us to visualize the structure of $\Lmod{A}$:
      \begin{center}
        \setlength{\unitlength}{0.6mm}
        \begin{picture}(200,50)
          \put(100,26){     
            \put(-46,12){\line(-1,0){24}}      
            \qbezier(-70,12)(-67,9)(-64,6)
            \qbezier(-68,2)(-66,4)(-64,6)
            \qbezier(-68,2)(-65,-1)(-62,-4)
            \qbezier(-70,-12)(-66,-8)(-62,-4)      
            \put(-46,-12){\line(-1,0){24}}
            \put(-57,-2){$\mathcal{P}_0$}

            \put(-43,11.5){...}
            \put(-43,-12.4){...}            

            \put(-34,-15){\line(0,1){30}}
            \put(-26,-18){\line(0,1){33}}
            \qbezier[10](-26,12)(-30,16)(-34,12)
            \qbezier[10](-26,12)(-30,8)(-34,12)      
            \qbezier[10](-26,-15)(-30,-11)(-34,-15)
            \qbezier(-28,-16.5)(-27,-17.25)(-26,-18)      
            \qbezier(-28,-16.5)(-31,-18)(-34,-15)      
            \put(-33.2,-25){$\mathcal{T}_0$}
            
            \put(-25.9,8){.}
            \put(-24.2,8){.}
            \put(-25.9,-8){.}
            \put(-24.2,-8){.}

            \put(-22,-15){\line(0,1){30}}
            \put(-14,-15){\line(0,1){30}}
            \qbezier[10](-22,12)(-18,16)(-14,12)
            \qbezier[10](-22,12)(-18,8)(-14,12)      
            \qbezier[10](-22,-15)(-18,-11)(-14,-15)
            \qbezier(-22,-15)(-18,-19)(-14,-15)            

            \put(-12,8){...}
            \put(-12,-8){...}

            \put(-4,-15){\line(0,1){30}}
            \put(4,-15){\line(0,1){30}}
            \qbezier[10](-4,12)(0,16)(4,12)
            \qbezier[10](-4,12)(0,8)(4,12)      
            \qbezier[10](-4,-15)(0,-11)(4,-15)
            \qbezier(-4,-15)(0,-19)(4,-15)      
            \put(-3,-25){$\mathcal{T}_1$}

            \put(6,8){...}
            \put(6,-8){...}

            \put(14,-15){\line(0,1){30}}
            \put(22,-15){\line(0,1){30}}
            \qbezier[10](22,12)(18,16)(14,12)
            \qbezier[10](22,12)(18,8)(14,12)      
            \qbezier[10](22,-15)(18,-11)(14,-15)
            \qbezier(22,-15)(18,-19)(14,-15)

            \put(23.9,8){.}
            \put(22.2,8){.}
            \put(23.9,-8){.}
            \put(22.2,-8){.}

            \put(34,-15){\line(0,1){30}}
            \put(26,-18){\line(0,1){33}}
            \qbezier[10](26,12)(30,16)(34,12)
            \qbezier[10](26,12)(30,8)(34,12)      
            \qbezier[10](26,-15)(30,-11)(34,-15)
            \qbezier(28,-16.5)(27,-17.25)(26,-18)      
            \qbezier(28,-16.5)(31,-18)(34,-15)      
            \put(25.5,-25){$\mathcal{T}_\infty$}

            \put(37,11.5){...}
            \put(37,-12.4){...}

            \put(46,12){\line(1,0){24}}      
            \qbezier(70,12)(73,9)(76,6)
            \qbezier(72,2)(74,4)(76,6)
            \qbezier(72,2)(74,0)(76,-2)
            \qbezier(74,-4)(75,-3)(76,-2)
            \qbezier(74,-4)(75,-5)(76,-6)
            \qbezier(70,-12)(73,-9)(76,-6)      
            \put(46,-12){\line(1,0){24}}
            \put(54,-2){$\mathcal{Q}_\infty$}      
          }
        \end{picture}
      \end{center}
Here, non-zero maps between different classes go only from left to right; 
given indecomposable modules $X$ and $Y$ with $\Hom(X,Y)\not=0$, then 
either $X$ and $Y$ belong to the same component, or  $X\in\mathcal{P}_0$,  or 
$X\in\mathcal{T}_\gamma$ and $Y\in\mathcal{T}_\delta$ and $\gamma<\delta$,
or $Y\in\mathcal{Q}_\infty$.
\end{thm}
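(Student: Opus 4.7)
My plan is to exploit the dual description of $A$ as both a tubular extension of the tame concealed algebra $A_0$ and a tubular coextension of $A_\infty$, together with the index function, in order to organise $\Lmod{A}$ into the stated pieces. From the extension side, the general theory of branch extensions of tame concealed algebras (see \cite[Ch.~4]{RIN}) ensures at once that the preprojective component $\mathcal{P}_0$ of $\Lmod{A_0}$ lifts unchanged to a preprojective component of $\Lmod{A}$, since all ray insertions used to build $A$ from $A_0$ take place inside stable tubes and do not meet $\mathcal{P}_0$. The same insertions turn the affected tube of $\Lmod{A_0}$ into a separating tubular $\mathbb{P}^1(k)$-family of type $\T$ which is no longer stable; this will be $\mathcal{T}_0$. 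Dually, the coextension description of $A$ produces $\mathcal{T}_\infty$ and the preinjective component $\mathcal{Q}_\infty$.

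Next I would verify that $\indice(M)\in\QQ_{\geq 0}\cup\{\infty\}$ for every non-zero indecomposable $M$, by combining Proposition~\ref{ringel} applied to $A_0$ and to $A_\infty$ with a computation of $\bil{h_0,-}_A$ and $\bil{h_\infty,-}_A$ on modules in the already identified $\mathcal{P}_0,\mathcal{T}_0,\mathcal{T}_\infty,\mathcal{Q}_\infty$. This shows that $\mathcal{P}_0\cup\mathcal{T}_0$ consists exactly of the indecomposables of index~$0$ and $\mathcal{T}_\infty\cup\mathcal{Q}_\infty$ of those of index~$\infty$. For intermediate $\gamma\in\QQ_{>0}$ the claim that $\mathcal{T}_\gamma$ is a stable tubular $\mathbb{P}^1(k)$-family of type $\T$ is established by self-similarity: for each such $\gamma$ one exhibits a tilting $A$-module supported on indecomposables of index $\leq\gamma$ whose endomorphism algebra $A'$ is again a tubular algebra of type $\T$, and under the induced derived equivalence $\mathcal{T}_\gamma$ corresponds to the tubular family of $A'$ at index $0$ (or $\infty$). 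The endpoint analysis already performed then transports the desired tubular structure back to $A$.

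The separation property and the left-to-right Hom-ordering then reduce, via these tilting equivalences, to the corresponding Hom-vanishing statements between $\mathcal{P}_0$, $\mathcal{T}_0$ and $\mathcal{T}_\infty$, $\mathcal{Q}_\infty$ at the endpoints, which are standard for tame concealed algebras and their branch (co)extensions. The main obstacle I anticipate is organising this self-similarity cleanly: one must produce, for each positive rational $\gamma$, an explicit tilting module realising the required equivalence, check that its endomorphism algebra is again tubular of type $\T$ (not merely derived equivalent to one), and verify that the tilting functor transports the index function in the expected way. Once this bookkeeping is in place, the density of the indices in $\QQ_{\geq 0}\cup\{\infty\}$ among indecomposables and the full Hom-ordering follow formally from the endpoint case.
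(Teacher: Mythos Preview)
The paper does not prove this theorem at all: it is stated with attribution to Ringel and cited as \cite[Thm.~5.2~(4)]{RIN}, with no argument given beyond the reference. So there is no ``paper's own proof'' to compare against; the result is imported wholesale from Ringel's monograph.

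That said, your outline is broadly faithful to how Ringel actually proceeds in Chapter~5 of \cite{RIN}. The identification of $\mathcal{P}_0$, $\mathcal{T}_0$ from the extension side and $\mathcal{T}_\infty$, $\mathcal{Q}_\infty$ from the coextension side is exactly how the endpoints are handled there, and the self-similarity you invoke is Ringel's \emph{shrinking functor} technique. One point of caution: the tilting modules Ringel uses do not in one step send a given $\mathcal{T}_\gamma$ to a boundary family of the tilted algebra. Rather, a single shrinking step transforms the index by a fractional linear map, and one iterates (using the Euclidean algorithm on the numerator and denominator of $\gamma$) until the index in question has been pushed to an endpoint. Your phrasing ``corresponds to the tubular family of $A'$ at index $0$ (or $\infty$)'' suggests a one-shot reduction, which is not quite what happens; the bookkeeping you flag as the main obstacle is precisely this iterative control of the index under successive tiltings. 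Apart from that, your plan is the right one, and the ``obstacle'' you anticipate is indeed the technical heart of Ringel's argument.
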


Let $\Lam= K Q /I$ a basic finite-dimensional $K$-algebra, 
presented as a path algebra modulo an admissible ideal.  By some abuse of
notation we say that $\Lam'\subset \Lam$ is a \emph{convex subalgebra} if 
for some convex subset $I\subset Q_o$ and $e=\sum_{i\in I} e_i$ we have
$\Lam'=e\Lam e$. Suppose that $\Lam$ has convex subalgebras
$\Lam_1, \Lam_2, \ldots, \Lam_m$ and $A_2, A_3, \ldots, A_m$ such that
the following holds:
\begin{itemize}
\item 
$\Lam_i$ is a tubular algebra for $i=1,2,\ldots, m$,
\item
$A_i\subset \Lam_{i-1}\cap \Lam_i$ is a tame concealed algebra for $i=2,\ldots,m$,
\item
$\Lam_{i-1}= {_{j=1}^{\ s_i}[E^-_{i,j}]A_i}$ is a tubular co-extension and
$\Lam_i =A_i[E^+_{i,j}]_{j=1}^{t_i}$ is a tubular extension for $i=2,\ldots,m$,
\item
$\Lam=\Lam_1+\Lam_2+\cdots+\Lam_m$.
\item 
If $R$ is an indecomposable projective or indecomposable $\Lam$-module,
then $\supp(R)$ is contained in some $\Lam_i$ if $R$ is not projective-injective,
otherwise $\supp(R/\operatorname{soc}(R))$ is contained in some $\Lam_i$.
\end{itemize}
Then, following de la Peña and Tomé~\cite{DELA}, $\Lam$ is called an 
\emph{iterated tubular algebra}. 
Let in this situation $A_0\subset \Lam_1$ and $A_{m+1}\subset\Lam_m$
be convex, tame concealed algebras such that $\Lam_1=A_0[E^+_{0,j}]_{j=1}^{t_0}$
is a tubular extension and $\Lam_m= {_{\phantom{x} j=1}^{s_{m+1}}[E^-_{m+1,j}]A_{m+1}}$ a 
cotubular extension. From
the considerations in~\cite[Sec.~2]{DELA} we obtain the following description
of $\Lmod{(\Lam/S)}$, where $S\subset \Lam$ is the ideal which is spanned
by the socles of indecomposable projective-injective modules. Note that
that the preprojective injective modules are the only indecomposable modules 
which are not killed by $S$. 
\begin{multline*}
\Lmod{(\Lam/S)} =
\cP_0\vee \cT_{0}[E^+_{0,j}]_{j=1}^{t_0} \vee\\ 
\bigvee_{i=1}^m \left(\bigvee_{\gam\in\QQ_{\,>0}}\! \cT_{i,\gam} 
\vee\, {_{j=1}^{\phantom{jx} s_i}[E^-_{i,j}] \cT_i[E^+_{i,j}]_{j=1}^{t_i}}\!\right) 
\vee\, {_{\phantom{xj} j=1}^{\ s_{m+1}}[E^-_{m+1,j}]\cT_{m+1}}\vee \cQ_{m+1},
\end{multline*}
where 
\begin{itemize}
\item
$\cP_0$ is the preprojective component of $A_0$, 
\item
$\cT_i$ is the separating tubular family of $A_i$ for $i=0,1,\ldots,m+1$,
\item
$\cT_{i,\gam}$ is the separating tubular family of index $\gam$ of $\Lam_i$
for $i=1,2,\ldots,m$ and
\item
$\cQ_{m+1}$ is the preinjective component for $A_{m+1}$.
\end{itemize}
In particular, $\Lam$ is tame of polynomial growth and the support of
any indecomposable module is contained in $\Lam_i+\Lam_{i+1}$ for some $i$.
Note, that our definition is slightly more restrictive than the one 
proposed by de la Peña and Tomé in that we avoid branch extensions.
This definition is sufficient for our purpose.

\subsection{Galois Coverings}
In this section we use the functorial approach to representation theory of 
algebras~\cite{Gabriel}. 

We consider algebras as locally bounded  
$K$-categories~\cite[Sec.~1.1]{Gabriel}. Thus, if $B$ is a finite dimensional
basic $K$-algebra with $1_B=e_1+\cdots +e_n$ a decomposition of the $1_B$ into
primitive orthogonal idempotents, we identify $B$ with a category 
with objects $\{1,2,\ldots,n\}$ and morphism spaces $B(i,j):= e_j B e_i$.
Under this identification, $B$-left modules correspond to $K$-linear 
functors from $B$ to the category of $K$-vector spaces.

Given a locally bounded  $K$-category  $A$ and a group $G$ acting freely on 
$A$ by $K$-linear automorphisms we denote by $A/G$  the orbit category. 
In this setting the canonical projection $F:A\to A/G$ is by definition a
\emph{Galois covering}~\cite[Sec.~3]{Gabriel}.  
Associated to $F$ we have the ``push-down" functor 
$F_{\lambda}:\Lmod{A}\to\Lmod{(A/G)}$, given by
\[
    (F_{\lambda}M)(a)=\coprod_{x\in F^{-1}(a)}M(x)
\]
for each object $a$ in $A/G$ and the obvious action of morphisms.

\begin{remark} \label{rem:gal}
For us is the following, well-known construction of Galois coverings  important:
Suppose that the  locally bounded basic $K$-category
$B$ is $\ZZ$-graded, i.e.~we have $B(x,y)=\oplus_{k\in\ZZ} B(x,y)_k$ and
for $f\in B(x,y)_k$ and $g\in B(y,z)_l$ we have $g\circ f\in B(x,z)_{k+l}$.
Then we can define a category $\tilde{B}$ with $\obj(\tilde{B})=\obj(B)\times\ZZ$
and $\tilde{B}((x,k),(y,l)):= B(x,y)_{l-k}$. The group $\ZZ$ acts freely
on $\tilde{B}$ by translations.  On objects of $\tilde{B}$ the action is
given by $g\cdot (x,k):= (x,g+k)$. Clearly we can identify the orbit category
$\tilde{B}/\ZZ$ with $B$. The canonical projection is then 
identified with
the functor $F\colon\tilde{B}\rightarrow B$, which sends $(x,k)$ to $x$
and $\tilde{B}((x,k), (y,l))$ to $B(x,y)_{l-k}\subset B(x,y)$.
\end{remark}

 For any $A$-module $M$, we will denote by $\supp M$ the support of $M$, that 
is, the full subcategory of $A$ formed by all objects $x$ such that 
$M(x)\not=0$. For each object $x$ in $A$, denote by $A_x$ the full 
subcategory of $A$ consisting of all the objects in $\supp M$, where $M$ is 
any indecomposable module with $x$ in $\supp M$.  We say
that $A$ is \emph{locally support-finite} if for every object $x$ in $A$,  
the number of objects in $A_x$ is finite.

From \cite{Dosko} we have the following:
    
\begin{thm}[Dowbor-Skowroński]\label{DS}
Let $A$ be a locally support-finite $K$-cate\-gory and let $G$ be a free 
abelian group acting freely on $A$. Assume that the action of $G$ on the 
isoclasses of indecomposable finitely generated $A$-modules is free. 
Then $A/G$ is locally support-finite and the push-down functor 
$F_{\lambda}:\Lmod{A}\to \Lmod{(A/G)}$ induces a bijection between
the $G$-orbits of isoclasses of indecomposable objects in $\Lmod{A}$ and the 
isoclasses of indecomposable objects in $\Lmod{(A/G)}$. In particular, 
$A$ is representation-tame if and only if $A/G$ is so.
\end{thm}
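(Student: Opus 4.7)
The plan is to follow the standard covering-theoretic machinery of Gabriel, with local support-finiteness playing the role that finite dimensionality plays in the classical setting of finite group actions. I would proceed in three steps.

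First, I would record that the push-down $F_\lambda$ is exact and $G$-equivariant in the sense that $F_\lambda(g\cdot M)\cong F_\lambda(M)$ for all $g\in G$, and that for an indecomposable $A$-module $M$ whose $G$-orbit is free on isoclasses one has the orbit decomposition
\[
\End_{A/G}(F_\lambda M)\cong \bigoplus_{g\in G}\Hom_A(M,\,g\cdot M),
\]
which reduces to $\End_A(M)$ under the freeness assumption; hence $F_\lambda M$ is indecomposable. This yields a well-defined injective map from $G$-orbits of isoclasses of indecomposable $A$-modules to isoclasses of indecomposable $A/G$-modules, and as a by-product it shows $A/G$ is locally support-finite, since the support of $F_\lambda M$ is the image of $\supp M$ under the projection.

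The main obstacle is surjectivity. Given an indecomposable $A/G$-module $N$ and an object $x\in\supp N$, fix a preimage $\tilde x\in F^{-1}(x)$; any candidate lift $M$ of $N$ meeting $\tilde x$ must satisfy $\supp M\subset A_{\tilde x}$, which is finite by hypothesis. The delicate point is to show that the restriction of $F$ to $A_{\tilde x}$ is injective on objects: this uses the freeness of the $G$-action on isoclasses of indecomposables, since otherwise a nontrivial $g\in G$ with $g\cdot A_{\tilde x}\cap A_{\tilde x}\neq\emptyset$ would, together with the finiteness of $A_{\tilde x}$, force some indecomposable through $\tilde x$ to be isomorphic to its $g$-translate. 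Once this is settled, $F$ identifies $A_{\tilde x}$ with a convex finite subcategory of $A/G$ containing $\supp N$; pulling $N$ back along this identification and extending by zero produces the required lift $M$, and using the adjunction between $F_\lambda$ and the pull-up functor together with the orbit Hom-decomposition one verifies $F_\lambda M\cong N$. Indecomposability of $M$ follows from that of $N$ via this isomorphism and the first step applied to $M$.

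For the equivalence of representation types I would invoke the standard characterization of tameness by one-parameter families: for each dimension $d$, all but finitely many indecomposables of dimension $d$ arise as $M_i\otimes_{K[T]} K[T]/(T-\lam)$ for finitely many bimodules $M_1,\dots,M_r$ free of finite rank over $K[T]$. Push-down along $F_\lambda$ transports a parametrizing family over $A$ (necessarily supported on a finite subcategory, by local support-finiteness) to a parametrizing family over $A/G$; conversely, the object-wise lifting procedure of the previous step, applied to a parametrizing family over $A/G$, produces a family over $A$ after choosing preimages. Combined with the orbit bijection, this yields that $A$ is representation-tame if and only if $A/G$ is, completing the proof.
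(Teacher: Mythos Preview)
The paper does not prove this theorem at all: it is quoted verbatim from Dowbor--Skowro\'nski \cite{Dosko} as a black box, and the remark immediately following it even notes that the original result is more general. There is therefore no ``paper's own proof'' to compare your sketch against.

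That said, your outline has two genuine gaps worth flagging. First, freeness of the $G$-action on isoclasses does \emph{not} force $\Hom_A(M,\,g\cdot M)=0$ for $g\neq e$; it only says such maps are non-isomorphisms. So the graded ring $\bigoplus_{g}\Hom_A(M,g\cdot M)$ does not ``reduce to $\End_A(M)$''. The standard argument is rather that this $G$-graded ring has local degree-zero part $\End_A(M)$ and that every homogeneous element of nonzero degree is a radical map, whence the whole ring is local and $F_\lambda M$ is indecomposable.

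Second, and more seriously, your surjectivity step does not go through as written. From $g\cdot A_{\tilde x}\cap A_{\tilde x}\neq\emptyset$ for some $g\neq e$ you cannot conclude that an indecomposable through $\tilde x$ is isomorphic to its $g$-translate: the intersection only says some object $y\in A_{\tilde x}$ has $g\cdot y\in A_{\tilde x}$, which merely means two (possibly different) indecomposables through $\tilde x$ have overlapping translated supports. Thus you have not established that $F$ is injective on $A_{\tilde x}$, and without that your lifting-by-pullback construction of $M$ collapses. The actual argument in \cite{Dosko} handles density of $F_\lambda$ by analysing the pull-up $F_.\,N$ and showing, via local support-finiteness, that it decomposes into finite-dimensional $A$-modules whose push-downs recover $N$; this is where the real work lies and it is not a one-line consequence of the hypotheses.
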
  

\begin{remark}
The theorem in~\cite{Dosko} is more general than the theorem above. 
We state it here in this form which is sufficient for our work.
\end{remark}

\subsection{Jacobian algebras}
In this subsection, we briefly recall the definition of Jacobian algebras. 
For more details see \cite{DWZ1}.

Let $Q$ be a quiver without loops and oriented $2$-cycles, 
and denote by $\CQ{Q}$ the complete path algebra of the quiver $Q$. Let 
$\CQ{Q}_{cyc}$ be the completion of the subspace spanned by all the oriented 
cycles in $Q$.

 A {\it{potential}} in $Q$ is an element $W\in\CQ{Q}_{cyc}$, i.e. a possibly 
infinite linear combination of cycles.

For any arrow $a\in Q_1$, define a continuous linear map 
$\partial_a\colon\CQ{Q}_{cyc}\rightarrow\CQ{Q}$, 
called the {\it{cyclic derivative}}, which acts on oriented cycles by
\[
 \partial_a(a_1\cdot\dots\cdot a_d)=\sum_{p:a_p=a}a_{p+1}\dots a_da_1\dots a_{p-1}
\]
Given a potential $W$ in $Q$, the {\it{Jacobian ideal}} $\cJ(W)$ of $W$ 
is the closure of the ideal in $\CQ{Q}$ which is generated by the elements 
$\partial_a(W)$ with $a\in Q$. 
The {\it{Jacobian algebra}}, denoted by $\cP(Q,W)$, is the factor 
algebra $\CQ{Q}/\cJ(W)$.

\begin{remark}~\label{rem:fjac}
Let $Q$ be a quiver without oriented $2$-cycles, and 
$W$ a  potential for $Q$ which is \emph{finite} linear
combination of cycles. Thus, for each arrow $a$ of $Q$ we have
$\partial_a W\in K Q$. In this case we may consider $\cJ'(W)$, the
ideal of the (non complete) path algebra $K Q$ which is generated by the
cyclic derivatives $\partial_a W$ for all arrows $a$ of $Q$. Clearly,
$\cJ'(W)$ is contained in the ideal of $K Q$ which is generated by 
the arrows. Suppose  moreover  that for some $m\in\mathbb{N}$  
each path  in $Q$ of length greater than $m$ belongs to $\cJ'(W)$. 
Then it is not hard to see that the (finite dimensional) algebra
$K Q/\cJ'(W)$, which we call \emph{fake Jacobian algebra}, 
is isomorphic to the Jacobian algebra $\cP(Q,W)$. This will
occur for all the Jacobian algebras which are studied explicitly in
this paper.
\end{remark}

\subsection{Cluster tilted algebras of tubular type as Jacobian algebras}
\label{ssec:TubJac}
Let $\XX$ be a weighted projective line of tubular type and 
$\pi\colon\coh(\XX)\rightarrow\cC_\XX$ the canonical projection. It is
easy to see that for each (basic) cluster tilting object 
$T=\oplus_{i=1}^n T_i\in\cC_\XX$ there exists up to isomorphism a unique
(basic) tilting sheaf $\tilde{T}=\oplus_{i=1}^n\tilde{T}_i\in\coh(\XX)$ with 
$\pi(\tilde{T})\cong T$.  In this situation, let us write the basic tubular
algebra $A:=\End_{\coh(\XX)}(\tilde{T})^\opp$ as a path algebra modulo an
admissible ideal, say 
$A\cong K Q_{\tilde{T}}/I_{\tilde{T}}$.
We may assume that the vertex $i$ of $Q_{\tilde{T}}$ corresponds to the
summand $\tilde{T}_i$ of $\tilde{T}$, and that $I$ is generated by a minimal
set of readable relations $\rho_1,\ldots,\rho_m$. In particular, each $\rho_i$
is a linear combination of paths from $s_i$ to $t_i$ in $Q_{\tilde{T}}$.
Let $Q_T$ be the quiver obtained from $Q_{\tilde{T}}$ by inserting a new arrow
$a_i\colon t_i\rightarrow s_i$ for $i=1,\ldots,m$.
Then $W_T:=\sum_{i=1}^r\rho_i a_i$ is a potential for $Q_T$, and we observe that
$Q_T$ has a no loops or oriented 2-cycles since $Q_{\tilde{T}}$ has no oriented
cycles.

\begin{remark} The QPs described in Figure~\ref{f:0} are obtained by the
just mentioned construction. In each case, if we delete the arrows which 
point from left to right, we obtain the quiver $\tilde{Q}$ of a tubular 
algebra $A$. If we write $A=K \tilde{Q}/I$ then $I$ is generated by
relations $\rho_1,\ldots,\rho_m$, one relation for each deleted arrow.
\end{remark} 

\begin{proposition} \label{prp:tubjac}
Let $\XX$ be a weighted projective line of tubular type and $T\in\cC_\XX$
a basic cluster tilting object. Then we have:
\begin{itemize}
\item[(a)]
$\End_{\cC_\XX}(T)^\opp\cong \cP(Q_T, W_T)$ for the above described
quiver with potential associated to $T$.
\item[(b)] Let for $k=1,\ldots, n$ be $T'_k\in\cC_\XX$ be the unique
object such that $\mu_k(T):= T/T_k\oplus T'_k$ is a basic cluster tilting 
object not isomorphic to $T$. Then the QP-mutation $\mu_k(Q_T,W_T)$,
as defined in~\cite[Sec.~5]{DWZ1}, is right equivalent to 
$(Q_{\mu_k(T)}, W_{\mu_k(T)})$.
In particular, the above mentioned QPs are non-degenerate.
\item[(c)]
$(Q_T,W_T)$ can be obtained from one of the QPs described in Figure~\ref{f:0}
by a finite sequence of QP-mutations.
\end{itemize}
\end{proposition}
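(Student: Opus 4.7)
The plan is to reduce all three claims to known general results: Amiot's theorem describing the endomorphism algebra of the canonical cluster-tilting object in the generalized cluster category $\cC_A$ of an algebra $A$ of global dimension at most $2$; the Keller--Yang compatibility between QP-mutation and mutation of cluster-tilting objects in $\cC_A$; and the connectedness of the cluster-tilting exchange graph in $\cC_\XX$ \cite[Prp.~8.6]{BaKuLe}.

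For (a), I would first show that every basic cluster-tilting object $T\in\cC_\XX$ admits a unique (up to isomorphism) lift to a basic tilting sheaf $\tilde{T}\in\coh(\XX)$ with $\pi(\tilde{T})\cong T$. This uses that $\coh(\XX)$ serves as a fundamental domain for the $\tau^{-1}[1]$-action on $\mathcal{D}^b(\coh(\XX))$, that the $2$-Calabi--Yau property transports $\Ext^1$-vanishing between $\cC_\XX$ and $\coh(\XX)$, and a count of indecomposable summands versus the rank of the Grothendieck group. Once $\tilde{T}$ is in place, $A:=\End_\XX(\tilde{T})^\opp$ is a tubular algebra and in particular has global dimension $2$. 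By Amiot's theorem, together with Keller's identification of $\cC_A$ with $\mathcal{D}^b(\coh(\XX))/\ebrace{\tau^{-1}[1]}=\cC_\XX$ in the tubular case, there is a triangle equivalence $\cC_A\cong\cC_\XX$ sending the canonical cluster-tilting object to $T$; under this equivalence $\End_{\cC_\XX}(T)^\opp\cong\cP(Q_T,W_T)$ for precisely the QP described in the proposition.

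For (b), I would invoke the Keller--Yang theorem on derived equivalences from QP-mutations, which in the Amiot setting directly yields that $\mu_k(Q_T,W_T)$ is right-equivalent to $(Q_{\mu_k(T)},W_{\mu_k(T)})$. Non-degeneracy of $(Q_T,W_T)$ is then automatic, since arbitrary iterated mutations of cluster-tilting objects in $\cC_\XX$ always exist and each intermediate QP is, via (a), the Jacobian algebra of a finite-dimensional endomorphism ring. For (c), connectedness of the exchange graph of basic cluster-tilting objects in $\cC_\XX$ \cite[Prp.~8.6]{BaKuLe} allows me to connect any $T$ to the specific representative underlying Figure~\ref{f:0} by a finite chain of mutations, and (b) translates this into a finite chain of QP-mutations relating $(Q_T,W_T)$ to one of the QPs of the figure.

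The main obstacle is the explicit identification in (a): one must verify that the potential produced by the Amiot--Keller construction, built from the Ginzburg dg algebra of the reversed-arrow extension of $(Q_{\tilde{T}},I_{\tilde{T}})$, reduces to $W_T=\sum_{i=1}^m\rho_ia_i$ with $\rho_1,\ldots,\rho_m$ a minimal system of readable relations for $I_{\tilde{T}}$. Here one uses that $Q_{\tilde{T}}$ has no oriented cycles and that $A$ has global dimension $2$, so that the higher terms in the Ginzburg differential vanish. Remark~\ref{rem:fjac} then lets me pass from the complete Jacobian algebra to its fake finite-dimensional quotient, closing the argument.
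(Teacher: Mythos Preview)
Your approach to (a) and (c) coincides with the paper's: both invoke Keller's identification $\cC_A\simeq\cC_\XX$ for the tubular algebra $A=\End_\XX(\tilde T)^\opp$ together with \cite[Thm.~6.12a)]{Kel11-defoCY} for (a), and both deduce (c) from (b) plus the connectedness of the exchange graph~\cite{BaKuLe}.

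For (b) you take a genuinely different route. The paper does \emph{not} appeal to Keller--Yang; instead it splits into cases. For type $(2,2,2,2;\lambda)$ it checks by hand that each of the finitely many Jacobian algebras is (weakly) symmetric, so that the vanishing condition $(\mathrm{C}_k)$ of \cite{BIRS} is trivially satisfied, and then applies \cite[Thm.~5.2]{BIRS}. For the remaining three types it uses the realization of $\cC_\XX$ as a stable category $\underline{\cC}_M\simeq\underline{\cC}_w$ inside a preprojective-algebra module category \cite{BACH}, and then invokes \cite[Cor.~5.4, Thm.~6.6]{BIRS}, together with connectedness of the exchange graph to reach every cluster-tilting object. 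Your Amiot/Keller--Yang argument is more uniform---no case distinction and no verification of $(\mathrm{C}_k)$---at the cost of importing the Ginzburg dg-algebra machinery; the paper's argument keeps closer to ordinary 2-CY category theory but needs the non-trivial preprojective realization of $\cC_\XX$ for three of the four types.

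One caveat: Keller--Yang gives a derived equivalence of Ginzburg dg algebras and hence an isomorphism $\cP(\mu_k(Q_T,W_T))\cong\End_{\cC_\XX}(\mu_k(T))^\opp\cong\cP(Q_{\mu_k(T)},W_{\mu_k(T)})$ of Jacobian algebras; it does not literally hand you a right equivalence of the two QPs. This suffices for non-degeneracy and for~(c) (and hence for the paper's main theorem), but your phrase ``directly yields that $\mu_k(Q_T,W_T)$ is right-equivalent to $(Q_{\mu_k(T)},W_{\mu_k(T)})$'' overstates what the cited machinery provides. The same remark applies to the paper's use of~\cite{BIRS}, which is likewise formulated at the level of endomorphism algebras.
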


\begin{proof}
(a) $A$ is of global dimension 2, moreover the derived categories
$D^b(\Lmod{A})$ and $D^b(\coh(\XX))$ are triangle equivalent. Since
$\coh(\XX)$ is a hereditary category, the cluster category $\cC_A$
can be identified naturally with $\cC_\XX$~\cite[Thm.~1]{Ke}. Now, our
claim follows from~\cite[Thm.~6.12a)]{Kel11-defoCY}.

For the proof of (b) we have to distinguish two cases.\\[.5ex]
(b1) $\XX$ is of type $(2,2,2,2;\lam)$. 
There are only relatively few families (which depend on the parameter
$\lam\in K\setminus\{0,1\}$) of tubular algebras of type 
$(2,2,2,2;\lambda)$. They were listed for example in~\cite[3.2]{Sko}.
It is straightforward to check that they yield the four quivers with
potential described in~\cite[p.117]{GeKeOp}. It is an easy exercise
to see that the corresponding Jacobian algebras 
(for $\lambda\in K\setminus\{0,1\}$) are finite-dimensional and
(weakly)  symmetric. In particular, for each cluster tilting object
$T\in\cC_\XX$ the endomorphism ring $\End_{\cC_\XX}(T)^\opp$ fulfills
trivially the vanishing condition ($\mathrm{C}_k$) from~\cite[p.~855]{BIRS}
for each vertex $k$ of its quiver. Thus the claim follows 
from~\cite[Thm.~5.2]{BIRS}.\\[.5ex]
(b2) $\XX$ is of type  $(3,3,3), (4,4,2)$ or $(6,3,2)$. In this case, it
was pointed out in~\cite[Sec.~3.2]{BACH} that $\cC_\XX$ is triangle equivalent
to a stable category $\underline{\cC}_M$ for a certain Frobenius category 
$\cC_M$, which is a subcategory of the module category a 
preprojective algebra of Dynkin type. See~\cite[Sec.~3.1]{BACH} for more
details on the definition of $\cC_M$. (In the cases $(3,3,3)$
resp. $(6,3,2)$ one takes for $\cC_M$ the full category of finite dimensional
modules over a preprojective algebra for a quiver $Q'$ of type $\mathsf{D}_4$
resp. $\mathsf{A}_5$. In case $(4,4,2)$ the category $\cC_M$ is a
proper subcategory of the module category of a quiver $Q'$ of type
$\mathsf{A}_7$. However, we don't need the specific description here.)
The categories $\cC_M$ are up to duality special cases of the categories
$\cC_w$ discussed in~\cite[Sec.~6]{BIRS}. More precisely, $w$ is the
element of the Coxeter group associated to $Q'$ which sends precisely
those positive roots for $Q'$, which are of the form $\ddim M'$ for an
indecomposable direct summand $M'$ of $M$, to a negative root. For example,
in the cases $(3,3,3)$ and $(6,3,2)$ the element $w$ would be the
longest element of the Coxeter group. Thus, by combining~(a),    
\cite[Cor.~5.4]{BIRS} and~\cite[Thm.~6.6]{BIRS} we see that the claim
holds for all cluster tilting objects $T$ which are in the mutation class
of one of the canonical cluster tilting objects associated to a
reduced expression for $w$. Finally, for a tubular cluster category
the exchange graph of cluster tilting objects is 
connected~\cite[Thm.~8.8]{BaKuLe}, so that our claim holds for all cluster 
tilting objects.

(c) This is clear by (a) and~(b) together with the just mentioned
connectedness of the exchange graph of cluster tilting objects for
tubular cluster categories.
\end{proof}

\begin{remark} \label{rem:tubjac}
(1) Proposition~\ref{prp:tubjac} provides for tubular
Jacobian algebras a description similar the the well known description
of cluster tilted algebras of acyclic type. It would be nice to have a 
more elementary proof for this result.

(2) Let $\XX$ and $\XX'$ be weighted projective lines of type
$(2,2,2,2; \lam)$ resp. $(2,2,2,2;\lam')$. It is well known that
$\coh(\XX)$ and $\coh(\XX')$ are equivalent abelian categories
iff $D^b(\coh(\XX))$ and  $D^b(\coh(\XX'))$ are triangle equivalent 
iff $\cC_{\XX}$ and $\cC_{\XX'}$ are triangle equivalent 
iff $\lam'\in O(\lam):=\{\lam,\lam^{-1}, 1-\lam, 1-\lam^{-1}, (1-\lam)^{-1}, 
\frac{\lam}{\lam -1}\}$, see also~\cite[p.~157]{HaRi}.
This fact is reflected by the following
observation:  two QPs $(Q^{(2)}, W^{(2)}_\lam)$ and $(Q^{(2)}, W^{(2)}_{\lam'})$ 
(from~Figure~\ref{f:0}) are in the same (per-) mutation class of QPs 
iff $\lam'\in O(\lam)$.  However, the corresponding Jacobian algebras are only 
isomorphic if $\lam'\in\{\lam,\lam^{-1}\}$. See also~\cite[9.9]{GeLaSc}
for more results of this kind. The cumbersome calculations will
appear elsewhere. 
\end{remark}

\section{Galois Coverings of tubular Jacobian algebras}\label{sec:Gal} 
\subsection{General setup}\label{ssec:GalGen} 
Set from now on $W^{(2)}:= W^{(2)}_\lam$ for a fixed $\lam\in K\setminus\{0,1\}$.
In this section we write down for each of the  the Jacobian algebras 
$\cP(Q^{(i)},W^{(i)})$ with $i=1,\ldots,4$ (see Figure~\ref{f:0}) a 
Galois covering.
We proceed as follows: We first verify that the Jacobian ideal $\cJ'(W^{(i)})$,
see Remark~\ref{rem:fjac}, of the path algebra $K Q^{(i)}$ is admissible.
As a byproduct we obtain the dimension vectors of the indecomposable 
projectives. We display them according to the shape of the quiver, 
and highlight the entry which corresponds to the top of the corresponding 
projective.

Next, if we assign to the arrows of $Q^{(i)}$ which point from left to right
degree $1$, and degree $0$ to the remaining arrows, $K Q^{(i)}$ becomes
naturally $\ZZ$-graded and $W^{(i)}\in K Q^{(i)}$  is homogeneous of degree $1$.
Thus the cyclic derivatives $\partial_a W^{(i)}$ for $a\in Q^{(i)}_1$ are 
also homogeneous. We conclude that $K Q^{(i)}/\cJ'(W^{(i)})=\cP(Q^{(i)},W^{(i)})$ is
$\ZZ$-graded. It is now straightforward to write down the Galois covering
mentioned in Remark~\ref{rem:gal} as a quiver with relations:
$\tilde{Q}^{(i)}_0= Q^{(i)}_0\times\ZZ$,  $\tilde{Q}^{(i)}_1= Q_1\times\ZZ$
with $s(a,z)= (s(a),z)$ and $t(a,z)= (t(a), z+\operatorname{deg}(a))$
for all $a\in Q^{(i)}_1$ and $z\in\ZZ$. Thanks to our observation that
$K Q^{(i)}/\cJ'(W)=\cP(Q^{(i)},W^{(i)})$ the defining relations for the
covering are just the obvious ``lifts'' of the cyclic derivatives
$\partial_a W^{(i)}$  with $a\in Q^{(i)}_1$. 
Given the easy structure of the dimension vectors of the indecomposable 
preprojective modules over the Jacobian algebra, it is easy to derive
the dimension vectors of the indecomposable projectives of the
Galois covering. This will be useful in the next section where we 
identify those Galois coverings as iterated tubular algebras.

\subsection{Case $(3,3,3)$} \label{cov:333}
We consider the quiver with potential $(Q^{(1)}, W^{(1)})$ from Figure~\ref{f:0}.
The  dimension vectors of its indecomposable projective modules over the
corresponding (fake) Jacobian algebra are as follows: 
\[
    \begin{array}{c}
      \phantom{000}0\phantom{00}0  \\
      \phantom{000}0\phantom{00}0  \\
              {\bf{1}}\phantom{00}1 \\
              \phantom{000}0 \phantom{00}0
    \end{array},\quad
    \begin{array}{c}
      \phantom{000}{\bf{1}}\phantom{00}1\\
      \phantom{000}0\phantom{00}0\\
      1\phantom{00}1 \\
      \phantom{000}0 \phantom{00}0
    \end{array},\quad
    \begin{array}{c}
      \phantom{000}0\phantom{00}0\\
      \phantom{000}{\bf{1}}\phantom{00}1\\
      1\phantom{00}1 \\
      \phantom{000}0 \phantom{00}0
    \end{array},\quad
    \begin{array}{c}
      \phantom{000}0\phantom{00}0\\
      \phantom{000}0\phantom{00}0\\
      1\phantom{00}1 \\
      \phantom{000}{\bf{1}}\phantom{00}1
    \end{array},\quad
    \begin{array}{c}
      \phantom{000}1\phantom{00}0\\
      \phantom{000}1\phantom{00}0\\
      2\phantom{00}{\bf{1}} \\
      \phantom{000}1 \phantom{00}0
    \end{array},
    \]

    \[
    \begin{array}{c}
      \phantom{000}0\phantom{00}{\bf{1}}\\
      \phantom{000}1\phantom{00}0\\
      1\phantom{00}1 \\
      \phantom{000}1 \phantom{00}0
    \end{array},\quad
    \begin{array}{c}
      \phantom{000}1\phantom{00}0\\
      \phantom{000}0\phantom{00}{\bf{1}}\\
      1\phantom{00}1 \\
      \phantom{000}1 \phantom{00}0
    \end{array}\text{\qquad and}\quad
    \begin{array}{c}
      \phantom{000}1\phantom{00}0\\
      \phantom{000}1\phantom{00}0\\
      1\phantom{00}1 \\
      \phantom{000}0 \phantom{00}{\bf{1}}
    \end{array}
    \]   

From the considerations in~\ref{ssec:GalGen} we obtain the following
Galois covering:

    \begin{center}
      \setlength{\unitlength}{0.7mm}
      \begin{picture}(120,50)
        \put(60,25){
          \put(-45,15){\circle*{2}} 
          \put(-65,15){\circle*{2}} 
          \put(-45,5){\circle*{2}}
          \put(-65,5){\circle*{2}}
          \put(-45,-15){\circle*{2}}
          \put(-65,-15){\circle*{2}}
          \put(-55,-5){\circle*{2}}
          \put(-75,-5){\circle*{2}}
          \multiput(-46,14)(-20,0){2}{\vector(-1,-2){8.7}}
          \multiput(-46,4)(-20,0){2}{\vector(-1,-1){8}}
          \multiput(-46,-14)(-10,10){2}{\vector(-1,1){8}}
          \multiput(-56,-6)(-10,10){1}{\vector(-1,-1){8}}
          \multiput(-66,-14)(-20,0){1}{\vector(-1,1){8}}
          \multiput(-55.7,-4)(-20,0){1}{\vector(-1,2){8.8}}
          \qbezier[12](-63.5,15)(-55,15)(-46.5,15)
          \qbezier[12](-63.5,5)(-55,5)(-46.5,5)
          \qbezier[12](-63.5,-15)(-55,-15)(-46.5,-15)
          \qbezier[12](-73.5,-5)(-65,-5)(-56.5,-5)

          \put(-62.2,10){$b$}
          \put(-50.5,10){$c$}
          \put(-63,-2.5){$e$}
          \put(-49.6,-2.5){$f$}
          \put(-49.4,-10.5){$h$}
          \put(-62.4,-10.5){$i$}
          \put(-76.5,0){$k$}
          \put(-68.8,-2.5){$l$}            
          \put(-74.2,-12.4){$m$}            

          \put(15,15){\circle*{2}} 
          \put(-5,15){\circle*{2}} 
          \put(15,5){\circle*{2}}
          \put(-5,5){\circle*{2}}
          \put(15,-15){\circle*{2}}
          \put(-5,-15){\circle*{2}}
          \put(5,-5){\circle*{2}}
          \put(-15,-5){\circle*{2}}
          \multiput(14,14)(-20,0){2}{\vector(-1,-2){8.7}}
          \multiput(14,4)(-20,0){2}{\vector(-1,-1){8}}
          \multiput(14,-14)(-10,10){2}{\vector(-1,1){8}}
          \multiput(4,-6)(-10,10){1}{\vector(-1,-1){8}}
          \multiput(-6,-14)(-20,0){1}{\vector(-1,1){8}}
          \multiput(4.3,-4)(-20,0){1}{\vector(-1,2){8.8}}
          \multiput(-7,-15)(0,20){2}{\vector(-1,0){36}}
          \multiput(-7,15)(0,0){2}{\vector(-1,0){36}}
          \multiput(-17,-5)(0,0){2}{\vector(-1,0){36}}  
          \qbezier[12](-3.5,15)(5,15)(13.5,15)
          \qbezier[12](-3.5,5)(5,5)(13.5,5)
          \qbezier[12](-3.5,-15)(5,-15)(13.5,-15)
          \qbezier[12](-13.5,-5)(-5,-5)(3.5,-5)

          \put(-26,16){$a$}
          \put(-2.2,10){$b$}
          \put(9.5,10){$c$}
          \put(-26,6){$d$}
          \put(-3,-2.5){$e$}
          \put(10.4,-2.5){$f$}
          \put(-26,-13.3){$g$}
          \put(10.6,-10.5){$h$}
          \put(-2.4,-10.5){$i$}
          \put(-32,-3){$j$}
          \put(-16.5,0){$k$}
          \put(-8.8,-2.5){$l$}            
          \put(-14.2,-12.4){$m$}

          \put(75,15){\circle*{2}} 
          \put(55,15){\circle*{2}}
          \put(75,5){\circle*{2}}
          \put(55,5){\circle*{2}}
          \put(75,-15){\circle*{2}}
          \put(55,-15){\circle*{2}}
          \put(65,-5){\circle*{2}}
          \put(45,-5){\circle*{2}}
          \multiput(74,14)(-20,0){2}{\vector(-1,-2){8.7}}
          \multiput(74,4)(-20,0){2}{\vector(-1,-1){8}}
          \multiput(74,-14)(-10,10){2}{\vector(-1,1){8}}
          \multiput(64,-6)(-10,10){1}{\vector(-1,-1){8}}
          \multiput(54,-14)(-20,0){1}{\vector(-1,1){8}}
          \multiput(64.3,-4)(-20,0){1}{\vector(-1,2){8.8}}
          \multiput(53,-15)(0,20){2}{\vector(-1,0){36}}
          \multiput(53,15)(0,0){2}{\vector(-1,0){36}}
          \multiput(43,-5)(0,0){2}{\vector(-1,0){36}}  
          \qbezier[12](56.5,15)(65,15)(73.5,15)
          \qbezier[12](56.5,5)(65,5)(73.5,5)
          \qbezier[12](56.5,-15)(65,-15)(73.5,-15)
          \qbezier[12](46.5,-5)(55,-5)(63.5,-5)

          \put(34,16){$a$}
          \put(57.8,10){$b$}
          \put(69.5,10){$c$}
          \put(34,6){$d$}
          \put(57,-2.5){$e$}
          \put(70.4,-2.5){$f$}
          \put(34,-13.3){$g$}
          \put(70.6,-10.5){$h$}
          \put(57.6,-10.5){$i$}
          \put(28,-3){$j$}
          \put(43.5,0){$k$}
          \put(51.2,-2.5){$l$}            
          \put(45.8,-12.4){$m$}            

          \put(-73,15){$\ldots$}
          \put(-73,5){$\ldots$}
          \put(-73,-15){$\ldots$}
          \put(-83,-5){$\ldots$}

          \put(77,15){$\ldots$}
          \put(77,5){$\ldots$}
          \put(77,-15){$\ldots$}
          \put(66.5,-5){$\ldots$}
        }     
      \end{picture}
    \end{center}

In this diagram the dotted lines indicate zero relations or  
commutativity relations. In addition to these relations, all the squares 
commute. 

\subsection{Case $(2,2,2,2;\lambda)$} \label{cov:2222}
We consider the quiver with potential $(Q^{(2)}, W^{(2)}_\lam)$ from 
Figure~\ref{f:0}.
An easy calculation shows:  every path of length $4$ belongs 
to $\cJ(W^{(2)}_\lam)$; the same is true for paths of length $3$ which are 
not cycles.

The dimension vectors of the indecomposable projective modules 
over the corresponding (fake) Jacobian algebra are as follows: 
\[
    \begin{array}{c}
      \phantom{00}{\bf{2}} \phantom{00}1 \phantom{00}1\\
      \phantom{00}0 \phantom{00}1 \phantom{00}1     
    \end{array},\quad
    \begin{array}{c}
      \phantom{00}0 \phantom{00}1 \phantom{00}1\\
      \phantom{00}{\bf{2}} \phantom{00}1 \phantom{00}1     
    \end{array},\quad
    \begin{array}{c}
      \phantom{00}1 \phantom{00}{\bf{2}} \phantom{00}1\\
      \phantom{00}1 \phantom{00}0 \phantom{00}1     
    \end{array},\quad
    \begin{array}{c}
      \phantom{00}1 \phantom{00}0 \phantom{00}1\\
      \phantom{00}1 \phantom{00}{\bf{2}} \phantom{00}1     
    \end{array},\quad
\]

\[
    \begin{array}{c}
      \phantom{00}1 \phantom{00}1 \phantom{00}{\bf{2}}\\
      \phantom{00}1 \phantom{00}1 \phantom{00}0     
    \end{array}   \text{\qquad and}\quad
    \begin{array}{c}
      \phantom{00}1 \phantom{00}1 \phantom{00}0\\
      \phantom{00}1 \phantom{00}1 \phantom{00}{\bf{2}}     
    \end{array},\quad   
 \]   

From the considerations in~\ref{ssec:GalGen} we obtain the following
Galois covering:

    \begin{center} 
      \setlength{\unitlength}{0.7mm}
      \begin{picture}(80,50)
        \put(40,25){
          \put(-40,10){\circle*{2}} 
          \put(-40,-10){\circle*{2}}
          \put(-60,10){\circle*{2}}
          \put(-60,-10){\circle*{2}}
          \put(-80,10){\circle*{2}}
          \put(-80,-10){\circle*{2}}
          \multiput(-42,10)(-20,0){2}{\vector(-1,0){16}}
          \multiput(-42,-10)(-20,0){2}{\vector(-1,0){16}}
          \multiput(-41.5,8.5)(-20,0){2}{\vector(-1,-1){17}}
          \multiput(-41.5,-8.5)(-20,0){2}{\vector(-1,1){17}}
          \put(-71,10.3){$a$}
          \put(-51,10.3){$b$}
          \put(-31,10.3){$c$}
          \put(-75,5){$d$}
          \put(-75,-2.8){$e$}
          \put(-54,5){$f$}
          \put(-54.5,-1.8){$g$}
          \put(-35,5){$h$}
          \put(-35,-2.8){$i$}
          \put(-71,-9.7){$j$}
          \put(-51,-9.7){$k$}
          \put(-31,-9.7){$l$}
          \multiput(-22,10)(0,-20){2}{\vector(-1,0){16}}
          \multiput(-21.5,8.5)(-20,0){1}{\vector(-1,-1){17}}
          \multiput(-21.5,-8.5)(-20,0){1}{\vector(-1,1){17}}

          \put(20,10){\circle*{2}} 
          \put(20,-10){\circle*{2}}
          \put(0,10){\circle*{2}}
          \put(0,-10){\circle*{2}}
          \put(-20,10){\circle*{2}}
          \put(-20,-10){\circle*{2}}
          \multiput(18,10)(-20,0){2}{\vector(-1,0){16}}
          \multiput(18,-10)(-20,0){2}{\vector(-1,0){16}}
          \multiput(18.5,8.5)(-20,0){2}{\vector(-1,-1){17}}
          \multiput(18.5,-8.5)(-20,0){2}{\vector(-1,1){17}}
          \put(-11,10.3){$a$}
          \put(9,10.3){$b$}
          \put(29,10.3){$c$}
          \put(-15,5){$d$}
          \put(-15,-2.8){$e$}
          \put(6,5){$f$}
          \put(5.5,-1.8){$g$}
          \put(25,5){$h$}
          \put(25,-2.8){$i$}
          \put(-11,-9.7){$j$}
          \put(9,-9.7){$k$}
          \put(29,-9.7){$l$}
          \multiput(38,10)(0,-20){2}{\vector(-1,0){16}}
          \multiput(38.5,8.5)(-20,0){1}{\vector(-1,-1){17}}
          \multiput(38.5,-8.5)(-20,0){1}{\vector(-1,1){17}}
          
          \put(80,10){\circle*{2}} 
          \put(80,-10){\circle*{2}}
          \put(60,10){\circle*{2}}
          \put(60,-10){\circle*{2}}
          \put(40,10){\circle*{2}}
          \put(40,-10){\circle*{2}}
          \multiput(78,10)(-20,0){2}{\vector(-1,0){16}}
          \multiput(78,-10)(-20,0){2}{\vector(-1,0){16}}
          \multiput(78.5,8.5)(-20,0){2}{\vector(-1,-1){17}}
          \multiput(78.5,-8.5)(-20,0){2}{\vector(-1,1){17}}
          \put(49,10.3){$a$}
          \put(69,10.3){$b$}
          \put(45,5){$d$}
          \put(45,-2.8){$e$}
          \put(66,5){$f$}
          \put(65.5,-1.8){$g$}
          \put(49,-9.7){$j$}
          \put(69,-9.7){$k$}          

          \put(-88,10){$\dots$}
          \put(-88,-10){$\dots$}
          \put(82,10){$\dots$}
          \put(82,-10){$\dots$}
        }     
      \end{picture}
    \end{center}
In which all the squares commutes up to possibly a factor $\lambda$.

\subsection{Case $(4,4,2)$} \label{cov:442}.
We consider the quiver with potential $(Q^{(3)}, W^{(3)})$ from Figure~\ref{f:0}:
The  dimension vectors of its indecomposable projective modules over the
corresponding (fake) Jacobian algebra are as follows:  
    \[
    \begin{array}{c}
      1  \\
      1\phantom{00}1  \\
      1\phantom{00}1\phantom{00}{\bf{1}}\\
      1 \phantom{00}1\\
      1
    \end{array},\quad
    \begin{array}{c}
      1  \\
      1\phantom{00}{\bf{1}}\\
      1\phantom{00}1\phantom{00}0 \\
      1 \phantom{00}0\\
      0
    \end{array},\quad
    \begin{array}{c}
      0\\
      1\phantom{00}0\\
      1\phantom{00}1\phantom{00}0\\
      1\phantom{00}{\bf{1}}\\
      1
    \end{array},\quad
    \begin{array}{c}
      {\bf{1}}\\
      1\phantom{00}0\\
      1\phantom{00}0\phantom{00}0\\
      0 \phantom{00}0\\
      0
    \end{array},\quad
    \begin{array}{c}
      0  \\
      1\phantom{00}1  \\
      1\phantom{00}{\bf{2}}\phantom{00}1\\
      1\phantom{00}1\\
      0
    \end{array},
    \]

    \[
    \begin{array}{c}
      0  \\
      0\phantom{00}0  \\
      1\phantom{00}0\phantom{00}0 \\
      1 \phantom{00}0\\
      {\bf{1}}
    \end{array},\quad 
    \begin{array}{c}
      0\\
      {\bf{1}}\phantom{00}1\\
      1\phantom{00}1\phantom{00}0\\
      0\phantom{00}0\\
      0
    \end{array},\quad
    \begin{array}{c}
      0\\
      0\phantom{00}0\\
      1\phantom{00}1\phantom{00}0\\
      {\bf{1}}\phantom{00}1\\
      0
    \end{array}\text{\qquad and}\quad
    \begin{array}{c}
      0\\
      0\phantom{00}0\\
      {\bf{1}}\phantom{00}1\phantom{00}1\\
      0\phantom{00}0\\
      0
    \end{array}.
    \]   

From the considerations in~\ref{ssec:GalGen} we obtain the following
Galois covering:

    \begin{center}
      \setlength{\unitlength}{0.7mm}
      \begin{picture}(120,60)
        \put(60,30){
          \put(-60,0){\circle*{2}} 
          \put(-80,0){\circle*{2}}
          \put(-40,0){\circle*{2}}
          \put(-50,10){\circle*{2}}
          \put(-70,10){\circle*{2}}
          \put(-60,20){\circle*{2}}
          \put(-50,-10){\circle*{2}}
          \put(-70,-10){\circle*{2}}
          \put(-60,-20){\circle*{2}}
          \multiput(-61,1)(10,10){2}{\vector(-1,1){8}}
          \multiput(-51,-9)(10,10){2}{\vector(-1,1){8}}
          \multiput(-71,-9)(10,-10){2}{\vector(-1,1){8}}
          \multiput(-71,9)(10,10){2}{\vector(-1,-1){8}}
          \multiput(-61,-1)(10,10){2}{\vector(-1,-1){8}} 
          \multiput(-51,-11)(10,10){2}{\vector(-1,-1){8}}
          \put(-68,15.2){$a$}
          \put(-55,15.2){$b$}
          \put(-78,5.2){$d$}
          \put(-64,5.2){$e$}
          \put(-57.5,5.2){$f$}
          \put(-44.9,5.2){$g$}
          \put(-74,-5){$j$}
          \put(-67.7,-5){$k$}
          \put(-54.6,-5){$l$}
          \put(-49.4,-5){$m$}
          \put(-67.7,-17){$o$}
          \put(-54.4,-17){$p$}

          \qbezier[22](-50,10)(-40,25)(0,20) 
          \qbezier[22](-60,20)(-20,25)(-10,10)
          \qbezier[22](-60,-20)(-20,-25)(-10,-10)
          \qbezier[22](-50,-10)(-40,-25)(0,-20)

          \put(0,0){\circle*{2}} 
          \put(-20,0){\circle*{2}}
          \put(20,0){\circle*{2}}
          \put(10,10){\circle*{2}}
          \put(-10,10){\circle*{2}}
          \put(0,20){\circle*{2}}
          \put(10,-10){\circle*{2}}
          \put(-10,-10){\circle*{2}}
          \put(0,-20){\circle*{2}}
          \multiput(-1,1)(10,10){2}{\vector(-1,1){8}}
          \multiput(9,-9)(10,10){2}{\vector(-1,1){8}}
          \multiput(-11,-9)(10,-10){2}{\vector(-1,1){8}}
          \multiput(-11,9)(10,10){2}{\vector(-1,-1){8}}
          \multiput(-1,-1)(10,10){2}{\vector(-1,-1){8}} 
          \multiput(9,-11)(10,10){2}{\vector(-1,-1){8}}
          \put(-8,15.2){$a$}
          \put(5,15.2){$b$}
          \put(-18,5.2){$d$}
          \put(-4,5.2){$e$}
          \put(2.5,5.2){$f$}
          \put(15.1,5.2){$g$}
          \put(-14,-5){$j$}
          \put(-7.7,-5){$k$}
          \put(5.4,-5){$l$}
          \put(10.6,-5){$m$}
          \put(-7.7,-17){$o$}
          \put(5.6,-17){$p$}

          \qbezier[22](10,10)(20,25)(60,20) 
          \qbezier[22](0,20)(40,25)(50,10)
          \qbezier[22](0,-20)(40,-25)(50,-10)         
          \qbezier[22](10,-10)(20,-25)(60,-20)

          \put(60,0){\circle*{2}} 
          \put(40,0){\circle*{2}}
          \put(80,0){\circle*{2}}
          \put(70,10){\circle*{2}}
          \put(50,10){\circle*{2}}
          \put(60,20){\circle*{2}}
          \put(70,-10){\circle*{2}}
          \put(50,-10){\circle*{2}}
          \put(60,-20){\circle*{2}}
          \multiput(59,1)(10,10){2}{\vector(-1,1){8}}
          \multiput(69,-9)(10,10){2}{\vector(-1,1){8}}
          \multiput(49,-9)(10,-10){2}{\vector(-1,1){8}}
          \multiput(49,9)(10,10){2}{\vector(-1,-1){8}}
          \multiput(59,-1)(10,10){2}{\vector(-1,-1){8}} 
          \multiput(69,-11)(10,10){2}{\vector(-1,-1){8}}
          \put(52,15.2){$a$}
          \put(65,15.2){$b$}
          \put(42,5.2){$d$}
          \put(56,5.2){$e$}
          \put(62.5,5.2){$f$}
          \put(75.1,5.2){$g$}
          \put(46,-5){$j$}
          \put(52.3,-5){$k$}
          \put(65.4,-5){$l$}
          \put(70.6,-5){$m$}
          \put(52.3,-17){$o$}
          \put(65.6,-17){$p$}

          \qbezier(48,11)(30,18)(13,11.4) 
          \qbezier(38,1)(20,8)(3,1.4)
          \qbezier(58,-1)(40,-8)(23,-1.8)
          \qbezier(48,-11)(30,-18)(13,-11.8)
          \put(17.5,12.9){\vector(-3,-1){6}}
          \put(17.5,-13.3){\vector(-3,1){6}}
          \put(7.5,2.9){\vector(-3,-1){6}}
          \put(27.5,-3.3){\vector(-3,1){6}}
          \put(-26,16){$c$} 
          \put(-36.6,6){$h$}
          \put(-26,-10){$i$}
          \put(-36,-18){$n$}

          \qbezier(-12,11)(-30,18)(-47,11.4) 
          \qbezier(-22,1)(-40,8)(-57,1.4)
          \qbezier(-2,-1)(-20,-8)(-37,-1.8)
          \qbezier(-12,-11)(-30,-18)(-47,-11.8)
          \put(-42.5,12.9){\vector(-3,-1){6}}
          \put(-42.5,-13.3){\vector(-3,1){6}}
          \put(-52.5,2.9){\vector(-3,-1){6}}
          \put(-32.5,-3.3){\vector(-3,1){6}}
          \put(34,16){$c$}
          \put(23.4,6){$h$}
          \put(34,-10){$i$}
          \put(24,-18){$n$}

          \put(82,0){$\dots$}
          \put(72,10){$\dots$}
          \put(72,-10){$\dots$}
          \put(-87.5,0){$\dots$}
          \put(-77.5,10){$\dots$}
          \put(-77.5,-10){$\dots$}
        }     
      \end{picture}
    \end{center}
In this diagram the dotted lines indicate zero relations. 
In addition to these  relations, all the squares commute.

\subsection{Case$(6,3,2)$} \label{cov:632}
We consider the quiver with potential $(Q^{(3)}, W^{(3)})$ from Figure~\ref{f:0}.
The corresponding Jacobian algebra $\cP(Q^{(4)},W^{(4)})$ is finite dimensional, 
the dimension vectors of its indecomposable projective modules are as follows: 

    \[
    \begin{array}{l}
      \phantom{00}0\phantom{00}{\bf{1}}\\
      0\phantom{0.0}1  \\
      \phantom{00}1\phantom{00}0\\
      1\phantom{0.0}0\\
      \phantom{00}0\phantom{00}0
    \end{array},\quad
    \begin{array}{l}
      \phantom{00}1\phantom{00}0\\
      1\phantom{0.0}1\\
      \phantom{00}1\phantom{00}{\bf{1}}\\
      1\phantom{0.0}1\\
      \phantom{00}1\phantom{00}0
    \end{array},\quad
    \begin{array}{l}
      \phantom{00}0\phantom{00}0\\
      1\phantom{0.0}0\\
      \phantom{00}1\phantom{00}0\\
      0\phantom{0.0}1\\
      \phantom{00}0\phantom{00}{\bf{1}}
    \end{array},\quad
    \begin{array}{l}
      \phantom{00}1\phantom{00}0\\
      1\phantom{0.0}{\bf{1}}\\
      \phantom{00}1\phantom{00}0\\
      1\phantom{0.0}0\\
      \phantom{00}0\phantom{00}0
    \end{array},\quad
    \begin{array}{l}
      \phantom{00}0\phantom{00}0\\
      1\phantom{0.0}0\\
      \phantom{00}1\phantom{00}0\\
      1\phantom{0.0}{\bf{1}}\\
      \phantom{00}1\phantom{00}0
    \end{array},
    \]

    \[
    \begin{array}{l}
      \phantom{00}{\bf{1}}\phantom{00}1  \\
      1\phantom{00.}1  \\
      \phantom{00}0\phantom{00}0\\
      0\phantom{00.}0\\
      \phantom{00}0\phantom{00}0
    \end{array},\quad
    \begin{array}{l}
      \phantom{00}0\phantom{00}0\\
      1\phantom{00.}1\\
      \phantom{00}{\bf{1}}\phantom{00}1\\
      1\phantom{00.}1\\
      \phantom{00}0\phantom{00}0
    \end{array},\quad
    \begin{array}{l}
      \phantom{00}0\phantom{00}0\\
      0\phantom{00.}0\\
      \phantom{00}0\phantom{00}0\\
      1\phantom{00.}1\\
      \phantom{00}{\bf{1}}\phantom{00}1
    \end{array},\quad
    \begin{array}{l}
      \phantom{00}0\phantom{00}0\\
              {\bf{1}}\phantom{00.}1\\
              \phantom{00}0\phantom{00}0\\
              0\phantom{00.}0\\
              \phantom{00}0\phantom{00}0
    \end{array} \text{\quad and} \quad
    \begin{array}{l}
      \phantom{00}0\phantom{00}0\\
      0\phantom{00.}0\\
      \phantom{00}0\phantom{00}0\\
              {\bf{1}}\phantom{00.}1\\
              \phantom{00}0\phantom{00}0
    \end{array},
    \]

From the considerations in~\ref{ssec:GalGen} we obtain the following
Galois covering:

    \begin{center} 
      \setlength{\unitlength}{0.7mm}
      \begin{picture}(120,60)
        \put(60,30){
          \put(-65,0){\circle*{2}}
          \put(-45,20){\circle*{2}}
          \put(-45,0){\circle*{2}}
          \put(-55,10){\circle*{2}}
          \put(-75,10){\circle*{2}}
          \put(-65,20){\circle*{2}}
          \put(-55,-10){\circle*{2}}
          \put(-75,-10){\circle*{2}}
          \put(-65,-20){\circle*{2}}
          \put(-45,-20){\circle*{2}}
          \multiput(-66,-1)(0,20){2}{\vector(-1,-1){8}}
          \multiput(-56,-11)(0,20){2}{\vector(-1,-1){8}}
          \multiput(-46,-1)(0,20){2}{\vector(-1,-1){8}}
          \multiput(-66,1)(0,-20){2}{\vector(-1,1){8}}
          \multiput(-56,-9)(0,20){2}{\vector(-1,1){8}} 
          \multiput(-46,1)(0,-20){2}{\vector(-1,1){8}}
          \put(-73,15){$b$}
          \put(-59.8,15){$c$}
          \put(-53,15){$d$}
          \put(-69.2,5){$f$}
          \put(-62.5,5){$g$}
          \put(-49.9,5){$h$}
          \put(-73,-5){$j$}
          \put(-59.8,-5){$k$}
          \put(-51.6,-5){$l$}
          \put(-69.6,-15){$n$}
          \put(-62.4,-15){$o$}
          \put(-49.9,-15){$p$}
          \qbezier[10](-45,20)(-55,20)(-65,20)
          \qbezier[10](-45,-20)(-55,-20)(-65,-20)

          \put(-5,0){\circle*{2}}
          \put(15,20){\circle*{2}}
          \put(15,0){\circle*{2}}
          \put(5,10){\circle*{2}}
          \put(-15,10){\circle*{2}}
          \put(-5,20){\circle*{2}}
          \put(5,-10){\circle*{2}}
          \put(-15,-10){\circle*{2}}
          \put(-5,-20){\circle*{2}}
          \put(15,-20){\circle*{2}}
          \multiput(-6,-1)(0,20){2}{\vector(-1,-1){8}}
          \multiput(4,-11)(0,20){2}{\vector(-1,-1){8}}
          \multiput(14,-1)(0,20){2}{\vector(-1,-1){8}}
          \multiput(-6,1)(0,-20){2}{\vector(-1,1){8}}
          \multiput(4,-9)(0,20){2}{\vector(-1,1){8}} 
          \multiput(14,1)(0,-20){2}{\vector(-1,1){8}}
          \put(-13,15){$b$}
          \put(.2,15){$c$}
          \put(7,15){$d$}
          \put(-9.2,5){$f$}
          \put(-2.5,5){$g$}
          \put(10.1,5){$h$}
          \put(-13,-5){$j$}
          \put(.2,-5){$k$}
          \put(8.4,-5){$l$}
          \put(-9.6,-15){$n$}
          \put(-2.4,-15){$o$}
          \put(10.1,-15){$p$}
          \qbezier[10](15,20)(5,20)(-5,20)
          \qbezier[10](15,-20)(5,-20)(-5,-20)

          \multiput(-6.5,0)(0,20){2}{\vector(-1,0){37}} 
          \multiput(-16.5,-10)(0,20){2}{\vector(-1,0){37}}
          \multiput(-6.5,-20)(0,0){2}{\vector(-1,0){37}}
          \put(-26.7,20.6){$a$}
          \put(-26.7,.8){$i$}
          \put(-26.7,-19.2){$q$}
          \put(-37.7,10.6){$e$}
          \put(-37.7,-9.3){$m$}

          \put(55,0){\circle*{2}}
          \put(75,20){\circle*{2}}
          \put(75,0){\circle*{2}}
          \put(65,10){\circle*{2}}
          \put(45,10){\circle*{2}}
          \put(55,20){\circle*{2}}
          \put(65,-10){\circle*{2}}
          \put(45,-10){\circle*{2}}
          \put(55,-20){\circle*{2}}
          \put(75,-20){\circle*{2}}
          \multiput(54,-1)(0,20){2}{\vector(-1,-1){8}}
          \multiput(64,-11)(0,20){2}{\vector(-1,-1){8}}
          \multiput(74,-1)(0,20){2}{\vector(-1,-1){8}}
          \multiput(54,1)(0,-20){2}{\vector(-1,1){8}}
          \multiput(64,-9)(0,20){2}{\vector(-1,1){8}} 
          \multiput(74,1)(0,-20){2}{\vector(-1,1){8}}
          \put(47,15){$b$}
          \put(60.2,15){$c$}
          \put(67,15){$d$}
          \put(50.8,5){$f$}
          \put(57.5,5){$g$}
          \put(70.1,5){$h$}
          \put(47,-5){$j$}
          \put(60.2,-5){$k$}
          \put(68.4,-5){$l$}
          \put(50.4,-15){$n$}
          \put(57.6,-15){$o$}
          \put(70.1,-15){$p$}
          \qbezier[10](75,20)(65,20)(55,20)
          \qbezier[10](75,-20)(65,-20)(55,-20)

          \multiput(53.5,0)(0,20){2}{\vector(-1,0){37}} 
          \multiput(43.5,-10)(0,20){2}{\vector(-1,0){37}}
          \multiput(53.5,-20)(0,0){2}{\vector(-1,0){37}}
          \put(33.3,20.6){$a$}
          \put(33.3,.8){$i$}
          \put(33.3,-19.2){$q$}
          \put(23.3,10.6){$e$}
          \put(23.3,-9.3){$m$}

          \put(-73,20){$\dots$}
          \put(-83,10){$\dots$}
          \put(-73,0){$\dots$}
          \put(-83,-10){$\dots$}
          \put(-73,-20){$\dots$}

          \put(77,20){$\dots$}
          \put(67,10){$\dots$}
          \put(77,0){$\dots$}
          \put(67,-10){$\dots$}
          \put(77,-20){$\dots$}
        }     
      \end{picture}
    \end{center}
In this diagram the dotted lines indicate zero relations. 
In addition to these relations all the squares commute.

\section{Iterated tubular coverings}  \label{sec:ItTubCov}
\subsection{}
We show in this section that the four Galois coverings constructed in 
Section~\ref{sec:Gal} are iterated tubular in the sense of de la 
Peña and Tomé~\cite{DELA}. We will use the characterization of
iterated tubular algebras from Section~\ref{ssec:TubIt}. To this end
we will use Proposition~\ref{ringel} in order to identify the relevant
extension modules as simple regular modules.
This is more or less the strategy from~\cite[Sec.~2]{HaRi} where the
authors show that the repetitive algebra of a canonical algebra of tubular
type is iterated tubular.

\subsection{Covering of $\cP(Q^{(1)},W^{(1)})$} We are dealing
with the covering described in Section~\ref{cov:333}.
Consider the following tame concealed algebra, which is  of tubular type 
$(2,2,2)$, \cite[pages~365--366]{RIN}:
      
      \begin{figure}[ht]
        \begin{center}
          \setlength{\unitlength}{0.6mm}
          \begin{picture}(120,34)
            \put(65,17){
              \put(-5,15){\circle*{2}}
              \put(-5,5){\circle*{2}}
              \put(-5,-15){\circle*{2}}
              \put(5,-5){\circle*{2}}
              \put(-15,-5){\circle*{2}}
              \multiput(-6,14)(-20,0){1}{\vector(-1,-2){8.7}}
              \multiput(-6,4)(-20,0){1}{\vector(-1,-1){8}}
              \multiput(4,-4)(-10,10){1}{\vector(-1,1){8}}
              \multiput(4,-6)(-10,10){1}{\vector(-1,-1){8}}
              \multiput(-6,-14)(-20,0){1}{\vector(-1,1){8}}
              \multiput(4.3,-4)(-20,0){1}{\vector(-1,2){8.8}}
              \qbezier[12](-13.5,-5)(-5,-5)(3.5,-5)         
            }    
          \end{picture}
        \end{center}   
        \caption{}\label{f:1}
      \end{figure}

\pagebreak[3]

 Consider the modules with dimension vectors:\quad

      \begin{equation}\label{v:1}
        \begin{array}{c}
          0\\
          1\\
          1\phantom{00}1 \\
          1 
        \end{array},\quad
        \begin{array}{c}
          1\\
          0\\
          1\phantom{00}1 \\
          1
        \end{array}\text{\quad and}\quad
        \begin{array}{c}
          1\\
          1\\
          1\phantom{00}1 \\
          0
        \end{array}
      \end{equation}

      Applying the Coxeter transformation we see that: 

      \[
      \begin{pmatrix}
        0  \\
        1  \\
        1\phantom{00}1 \\
        1 
      \end{pmatrix}\Phi=
      \begin{array}{c}
        1  \\
        0 \\
        0\phantom{00}   0 \\
        0
      \end{array},\qquad
      \begin{pmatrix}
        1  \\
        0  \\
        0\phantom{00} 0 \\
        0 
      \end{pmatrix}\Phi=
      \begin{array}{c}
        0  \\
        1  \\
        1\phantom{00}1 \\
        1 
      \end{array}; 
      \]

      \[
      \begin{pmatrix}
        1  \\
        0  \\
        1\phantom{00} 1 \\
        1 
      \end{pmatrix}\Phi=
      \begin{array}{c}
        0 \\
        1  \\
        0\phantom{00}  0 \\
        0 
      \end{array},\qquad
      \begin{pmatrix}
        0  \\
        1  \\
        0\phantom{00}0 \\
        0 
      \end{pmatrix}\Phi=
      \begin{array}{c}
        1  \\
        0  \\
        1\phantom{00}1 \\
        1 
      \end{array}; 
      \]

      \[
      \begin{pmatrix}
        1 \\
        1 \\
        1\phantom{00}1 \\
        0 
      \end{pmatrix}\Phi=
      \begin{array}{ccc}
        0  \\
        0  \\
        0\phantom{00}0 \\
        1 
      \end{array},\qquad
      \begin{pmatrix}
        0  \\
        0  \\
        0\phantom{00}0 \\
        1 
      \end{pmatrix}\Phi=
      \begin{array}{ccc}
        1 \\
        1 \\
        1\phantom{00}1 \\
        0 
      \end{array}; \qquad h_1=
      \begin{array}{c}
        1  \\
        1  \\
        1\phantom{00}1 \\
        1 
      \end{array}; 
      \]

Here the vector $h_1$ is the positive generator of the radical of the 
quadratic form of the quiver with relations in Figure~\ref{f:1} . 
By Proposition~\ref{ringel} the modules with 
dimension vectors in \eqref{v:1} are simple regular modules in
different tubes of rank $2$. Applying one-point extensions with these modules 
we obtain, by~\cite{RIN}, the following tubular algebra of type $(3,3,3)$:

      \begin{center}
        \setlength{\unitlength}{0.6mm}
        \begin{picture}(120,40)
          \put(40,20){
            \put(15,15){\circle*{2}} 
            \put(-5,15){\circle*{2}} 
            \put(15,5){\circle*{2}}
            \put(-5,5){\circle*{2}}
            \put(15,-15){\circle*{2}}
            \put(-5,-15){\circle*{2}}
            \put(5,-5){\circle*{2}}
            \put(-15,-5){\circle*{2}}
            \multiput(14,14)(-20,0){2}{\vector(-1,-2){8.7}}
            \multiput(14,4)(-20,0){2}{\vector(-1,-1){8}}
            \multiput(14,-14)(-10,10){2}{\vector(-1,1){8}}
            \multiput(4,-6)(-10,10){1}{\vector(-1,-1){8}}
            \multiput(-6,-14)(-20,0){1}{\vector(-1,1){8}}
            \multiput(4.3,-4)(-20,0){1}{\vector(-1,2){8.8}}
            \qbezier[12](-3.5,15)(5,15)(13.5,15)
            \qbezier[12](-3.5,5)(5,5)(13.5,5)
            \qbezier[12](-3.5,-15)(5,-15)(13.5,-15)
            \qbezier[12](-13.5,-5)(-5,-5)(3.5,-5)
          }
        \end{picture} 
      \end{center}
      
Now, in the above figure we find the following tame concealed subca\-tegory, 
which is of tubular type $(3,3,2)$
(tame concealed of type $\widetilde{E}_6$)~\cite[pp.~365--366]{RIN}:

      \begin{figure}[ht]
        \begin{center}
          \setlength{\unitlength}{0.6mm}
          \begin{picture}(120,40)
            \put(40,20){
              \put(15,15){\circle*{2}}  
              \put(-5,15){\circle*{2}} 
              \put(15,5){\circle*{2}}
              \put(-5,5){\circle*{2}}
              \put(15,-15){\circle*{2}}
              \put(-5,-15){\circle*{2}}
              \put(5,-5){\circle*{2}}

              \multiput(14,14)(-20,0){1}{\vector(-1,-2){8.7}}
              \multiput(14,4)(-20,0){1}{\vector(-1,-1){8}}
              \multiput(14,-14)(-10,10){2}{\vector(-1,1){8}}
              \multiput(4,-6)(-10,10){1}{\vector(-1,-1){8}}
              \multiput(4.3,-4)(-20,0){1}{\vector(-1,2){8.8}}
              \qbezier[12](-3.5,15)(5,15)(13.5,15)
              \qbezier[12](-3.5,5)(5,5)(13.5,5)
              \qbezier[12](-3.5,-15)(5,-15)(13.5,-15)
            }
          \end{picture} 
        \end{center}
        \caption{}\label{f:2}
      \end{figure}
      
We consider the (simple) module with dimension vector:
      \begin{equation}\label{v:2}
        \begin{array}{c}
          0\phantom{00}0 \\
          0\phantom{00} 0\\
          1   \\
          0\phantom{00} 0
        \end{array}
      \end{equation}
      
Applying the Coxeter transformation we see that:       
      \[
      \begin{pmatrix}
        0\phantom{00}0 \\
        0\phantom{00} 0\\
        1   \\
        0\phantom{00} 0
      \end{pmatrix}\Phi=
      \begin{array}{c}
        1\phantom{00}1 \\
        1\phantom{00}1\\
        2   \\
        1\phantom{00}1     
      \end{array} \quad \text{and} \qquad
      \begin{pmatrix}
        1\phantom{00}1 \\
        1\phantom{00}1\\
        2   \\
        1\phantom{0}1     
      \end{pmatrix}\Phi=
      \begin{array}{c}
        0\phantom{00}0 \\
        0\phantom{00} 0\\
        1   \\
        0\phantom{00}0     
      \end{array}; \qquad h_2=
      \begin{array}{c}
        1\phantom{00}1 \\
        1\phantom{00}1\\
        3   \\
        1\phantom{00}1     
      \end{array} 
      \]
Here, the vector $h_2$ is the positive generator of the radical of the quadratic form of the quiver with relations in Figure~\ref{f:2}. 
By Proposition~\ref{ringel}, the module with 
dimension vector in \eqref{v:2} is a simple regular module in a tube of
rank $2$. The one-point extension with this module is, by~\cite{RIN}, 
the following tubular algebra of type $(3,3,3)$:

      \begin{center}
        \setlength{\unitlength}{0.6mm}
        \begin{picture}(120,40)
          \put(40,20){
            \put(15,15){\circle*{2}} 
            \put(-5,15){\circle*{2}} 
            \put(15,5){\circle*{2}}
            \put(-5,5){\circle*{2}}
            \put(15,-15){\circle*{2}}
            \put(-5,-15){\circle*{2}}
            \put(5,-5){\circle*{2}}

            \multiput(14,14)(-20,0){1}{\vector(-1,-2){8.7}}
            \multiput(14,4)(-20,0){1}{\vector(-1,-1){8}}
            \multiput(14,-14)(-10,10){2}{\vector(-1,1){8}}
            \multiput(4,-6)(-10,10){1}{\vector(-1,-1){8}}
            \multiput(4.3,-4)(-20,0){1}{\vector(-1,2){8.8}}

            \qbezier[12](-3.5,15)(5,15)(13.5,15)
            \qbezier[12](-3.5,5)(5,5)(13.5,5)
            \qbezier[12](-3.5,-15)(5,-15)(13.5,-15)

            \put(45,-5){\circle*{2}} 
            \multiput(43,-5)(-20,0){1}{\vector(-1,0){36}}

            \qbezier[25](-3.5,14.5)(20,5)(43.5,-4.7)
            \qbezier[25](-3.5,4.5)(20,0)(43.5,-4.7)
            \qbezier[25](-3.5,-14.5)(20,-10)(43.5,-5.3)
          }     
        \end{picture}
      \end{center}

Now in the above figure, we find the following tame concealed subalgebra, 
which is of tubular type $(2,2,2)$, (in fact, this is an hereditary algebra of 
type $\widetilde{D}_4$) \cite[pages~365--366]{RIN}:see Figure~\ref{f:3}

      \begin{figure}[ht]
        \begin{center}
          \setlength{\unitlength}{0.6mm}
          \begin{picture}(120,40)
            \put(30,20){
              \put(15,15){\circle*{2}} 
              \put(15,5){\circle*{2}}
              \put(15,-15){\circle*{2}}
              \put(5,-5){\circle*{2}}

              \multiput(14,14)(-20,0){1}{\vector(-1,-2){8.7}}
              \multiput(14,4)(-20,0){1}{\vector(-1,-1){8}}
              \multiput(14,-14)(-10,10){1}{\vector(-1,1){8}}

              \put(45,-5){\circle*{2}} 
              \multiput(43,-5)(-20,0){1}{\vector(-1,0){36}}
            }     
          \end{picture}
        \end{center}
        \caption{}\label{f:3}
      \end{figure}

Consider the modules with dimension vectors:
      \begin{equation}\label{v:3}
        \begin{array}{c}
          1  \\
          0 \\
          1\phantom{00}   1 \\
          0
        \end{array},\qquad
        \begin{array}{c}
          0  \\
          1 \\
          1\phantom{00}   1 \\
          0
        \end{array}\text{ \quad and}\qquad
        \begin{array}{c}
          0  \\
          0 \\
          1\phantom{00}   1 \\
          1
        \end{array}
      \end{equation}

      Applying the Coxeter transformation we see that: 

      \[
      \begin{pmatrix}
        1  \\
        0 \\
        1\phantom{00}   1 \\
        0
      \end{pmatrix}\Phi=
      \begin{array}{c}
        0  \\
        1 \\
        1\phantom{00}   0 \\
        1     
      \end{array},\qquad
      \begin{pmatrix}
        0  \\
        1  \\
        1\phantom{00} 0 \\
        1 
      \end{pmatrix}\Phi=
      \begin{array}{c}
        1  \\
        0  \\
        1\phantom{00}1 \\
        0 
      \end{array}; 
      \]
      \[
      \begin{pmatrix}
        0  \\
        1  \\
        1\phantom{00} 1 \\
        0 
      \end{pmatrix}\Phi=
      \begin{array}{c}
        1 \\
        0  \\
        1\phantom{00}  0 \\
        1 
      \end{array},\qquad
      \begin{pmatrix}
        1  \\
        0  \\
        1\phantom{00}0 \\
        1 
      \end{pmatrix}\Phi=
      \begin{array}{c}
        0  \\
        1  \\
        1\phantom{00}1 \\
        0 
      \end{array}; 
      \]

      \[
      \begin{pmatrix}
        0 \\
        0 \\
        1\phantom{00}1 \\
        1 
      \end{pmatrix}\Phi=
      \begin{array}{ccc}
        1  \\
        1  \\
        1\phantom{00}0 \\
        0 
      \end{array},\qquad
      \begin{pmatrix}
        1  \\
        1  \\
        1\phantom{00}0 \\
        0 
      \end{pmatrix}\Phi=
      \begin{array}{ccc}
        0  \\
        0  \\
        1\phantom{00}1 \\
        1 
      \end{array}; \qquad h_3=
      \begin{array}{c}
        1 \\
        1 \\
        2\phantom{00}1 \\
        1 
      \end{array}. 
      \]

Here the vector $h_3$ is the positive  generator of the radical of the 
quadratic form of  the quiver with relations in Figure~\ref{f:3}. 
By Proposition~\ref{ringel}, the modules with 
dimension vectors in \eqref{v:3} are simple regular modules in different tubes 
of rank $2$. Applying one-point extensions with these modules we obtain, 
by~\cite{RIN}, the following tubular algebra of type $(3,3,3)$:

      \begin{center}
        \setlength{\unitlength}{0.6mm}
        \begin{picture}(120,50)
          \put(30,25){
            \put(15,15){\circle*{2}} 
            \put(15,5){\circle*{2}}
            \put(15,-15){\circle*{2}}
            \put(5,-5){\circle*{2}}
            \multiput(14,14)(-20,0){1}{\vector(-1,-2){8.7}}
            \multiput(14,4)(-20,0){1}{\vector(-1,-1){8}}
            \multiput(14,-14)(-10,10){1}{\vector(-1,1){8}}

            \put(55,15){\circle*{2}}
            \put(55,5){\circle*{2}}
            \put(55,-15){\circle*{2}}
            \put(45,-5){\circle*{2}}
            \multiput(54,14)(-20,0){1}{\vector(-1,-2){8.7}}
            \multiput(54,4)(-20,0){1}{\vector(-1,-1){8}}
            \multiput(54,-14)(-20,0){1}{\vector(-1,1){8}}

            \multiput(53,-15)(0,20){2}{\vector(-1,0){36}}
            \multiput(53,15)(0,0){2}{\vector(-1,0){36}}
            \multiput(43,-5)(0,0){2}{\vector(-1,0){36}}  

            \qbezier[25](6.5,-4.5)(30,5)(53.5,14.5) 
            \qbezier[25](6.5,-4.5)(30,0)(53.5,4.5)
            \qbezier[25](6.5,-5.5)(30,-10)(53.5,-14.5)
          }     
        \end{picture}
      \end{center}

Now in the above figure we find the following tame concealed subalgebra, 
which is of tubular type $(3,3,2)$, (in fact, this is an hereditary algebra of 
type $\widetilde{E}_6$) \cite[pp.~365--366]{RIN}: see Figure~\ref{f:4}

      \begin{figure}[ht]
        \begin{center}
          \setlength{\unitlength}{0.6mm}
          \begin{picture}(120,50)
            \put(25,25){
              \put(15,15){\circle*{2}} 
              \put(15,5){\circle*{2}}
              \put(15,-15){\circle*{2}}

              \put(55,15){\circle*{2}}
              \put(55,5){\circle*{2}}
              \put(55,-15){\circle*{2}}
              \put(45,-5){\circle*{2}}
              \multiput(54,14)(-20,0){1}{\vector(-1,-2){8.7}}
              \multiput(54,4)(-20,0){1}{\vector(-1,-1){8}}
              \multiput(54,-14)(-20,0){1}{\vector(-1,1){8}}

              \multiput(53,-15)(0,20){2}{\vector(-1,0){36}}
              \multiput(53,15)(0,0){1}{\vector(-1,0){36}}
            }     
          \end{picture}
        \end{center}
        \caption{}\label{f:4}
      \end{figure}

 \pagebreak[3]

 Consider the module with dimension vector:

      \begin{equation}\label{v:4}
        \begin{array}{c}
          0\phantom{00}1  \\
          0\phantom{00}1 \\
          2 \\
          0\phantom{00}1
        \end{array},\qquad
      \end{equation}
Applying the Coxeter transformation we see that: 

      \[
      \begin{pmatrix}
        0\phantom{00}1  \\
        0\phantom{00}1\\
        2 \\
        0\phantom{00}1
      \end{pmatrix}\Phi=
      \begin{array}{c}
        1\phantom{00}1  \\
        1\phantom{00}1\\
        1 \\
        1\phantom{00}1
      \end{array},\quad
      \begin{pmatrix}
        1\phantom{00}1  \\
        1\phantom{00}1\\
        1 \\
        1\phantom{00}1
      \end{pmatrix}\Phi=
      \begin{array}{c}
        0\phantom{00}1  \\
        0\phantom{00}1\\
        2 \\
        0\phantom{00}1
      \end{array}; \qquad h_4=
      \begin{array}{c}
        1\phantom{00}2\\
        1\phantom{00}2\\
        3 \\
        1\phantom{00}2
      \end{array}.
      \]
Here the vector $h_4$ is the positive  generator of the radical of the 
quadratic form of the quiver with relations in Figure~\ref{f:4}. 
By Proposition~\ref{ringel}, the module with 
dimension vector in~\eqref{v:4} is a simple regular module in a tube of
rank $2$. The one-point extension with this module is, by \cite{RIN}, 
the following tubular algebra of type $(3,3,3)$:

      \begin{center}
        \setlength{\unitlength}{0.6mm}
        \begin{picture}(120,50)
          \put(60,25){
            \put(-25,15){\circle*{2}} 
            \put(-25,5){\circle*{2}}
            \put(-25,-15){\circle*{2}}
            \put(15,15){\circle*{2}}
            \put(15,5){\circle*{2}}
            \put(15,-15){\circle*{2}}
            \put(25,-5){\circle*{2}}
            \put(5,-5){\circle*{2}}
            \multiput(14,14)(-20,0){1}{\vector(-1,-2){8.7}}
            \multiput(14,4)(-20,0){1}{\vector(-1,-1){8}}
            \multiput(24,-4)(-10,10){1}{\vector(-1,1){8}}
            \multiput(24,-6)(-10,10){1}{\vector(-1,-1){8}}
            \multiput(14,-14)(-20,0){1}{\vector(-1,1){8}}
            \multiput(24.3,-4)(-20,0){1}{\vector(-1,2){8.8}}
            \multiput(13,-15)(0,20){2}{\vector(-1,0){36}}
            \multiput(13,15)(0,0){2}{\vector(-1,0){36}}

            \qbezier[12](6.5,-5)(15,-5)(23.5,-5)
          }     
        \end{picture}
      \end{center}

In the above figure, we find the same subalgebra of tubular type 
$(2,2,2,2)$ as the one in Figure \ref{f:1}. 
So, this Galois covering is an iterated 
tubular algebra in the sense of de la Peña-Tomé \cite{DELA}
      \medskip

\subsection{Covering of $\cP(Q^{(2)},W^{(2)}_\lam)$} We are dealing
with the covering described in Section~\ref{cov:2222}. This case
is well-known, we include it for completeness.

Consider the following tame concealed (in fact hereditary) algebra, 
which is of tubular type $(2,2)$, \cite[pp.~365--366]{RIN}:

      \begin{figure}[ht]
        \begin{center} 
          \setlength{\unitlength}{0.7mm}
          \begin{picture}(80,34)
            \put(40,17){
              \put(10,10){\circle*{2}} 
              \put(10,-10){\circle*{2}}
              \put(-10,10){\circle*{2}}
              \put(-10,-10){\circle*{2}}
              \multiput(8,10)(0,-20){2}{\vector(-1,0){16}}

              \multiput(8.5,8.5)(-20,0){1}{\vector(-1,-1){17}}
              \multiput(8.5,-8.5)(-20,0){1}{\vector(-1,1){17}}
            }     
          \end{picture}
        \end{center}
        \caption{}\label{f:5}
      \end{figure}
The vector
      \begin{equation}\label{v:5}
        h_5=
        \begin{array}{c}       
          1\phantom{,,}1 \\                 
          1\phantom{,,}1       
        \end{array}
      \end{equation}
is the positive radical generator for the hereditary algebra from
Figure~\ref{f:5}. We have an obvious one parameter family 
$(M_\lam)_{\lam\in K\setminus\{0\}}$ of indecomposable 
(simple regular) modules with this dimension vector. One point extension
with $M_1$ and $M_\lam$ yields by~\cite{RIN} 
a tubular algebra of type $(2,2,2,2)$ with the following quiver:

      \begin{center} 
        \setlength{\unitlength}{0.6mm}
        \begin{picture}(80,34)
          \put(30,17){
            \put(10,10){\circle*{2}} 
            \put(10,-10){\circle*{2}}
            \put(-10,10){\circle*{2}}
            \put(-10,-10){\circle*{2}}
            \put(30,10){\circle*{2}}
            \put(30,-10){\circle*{2}}
            \multiput(28,10)(0,-20){2}{\vector(-1,0){16}}
            \multiput(8,10)(0,-20){2}{\vector(-1,0){16}}
            \multiput(8.5,8.5)(20,0){2}{\vector(-1,-1){17}}
            \multiput(8.5,-8.5)(20,0){2}{\vector(-1,1){17}}
          }     
        \end{picture}
      \end{center}

\begin{remark}
In the above figure three squares commute, and fourth one commutes up to 
a factor $\lambda$.
\end{remark}

This tubular algebra is also a cotubular algebra of a hereditary algebra
of tubular type $(2,2)$, isomorphic to the one in Figure~\ref{f:5}. 
We conclude by Section~\ref{ssec:TubIt} that this Galois covering is an iterated tubular 
algebra in the sense of de la Peña-Tomé~\cite{DELA}. Notice that in this
case we are dealing with a self-injective Galois covering.

\subsection{Covering of $\cP(Q^{(3)},W^{(3)})$} We are dealing
with the covering described in Section~\ref{cov:442}.
Consider the  tame concealed algebra of tubular 
type $(4,2,2)$ which is displayed in Figure~\ref{f:7}, 
\cite[pages~365--366]{RIN}:

      \begin{figure}[ht]
        \begin{center}
          \setlength{\unitlength}{0.6mm}
          \begin{picture}(120,50)
            \put(25,25){         
              \put(0,0){\circle*{2}}
              \put(10,10){\circle*{2}}
              \put(-10,10){\circle*{2}}
              \put(0,20){\circle*{2}}
              \put(10,-10){\circle*{2}}
              \put(-10,-10){\circle*{2}}
              \put(0,-20){\circle*{2}}
              \multiput(-1,1)(10,10){2}{\vector(-1,1){8}}
              \multiput(9,-9)(10,10){1}{\vector(-1,1){8}}
              \multiput(-1,-19)(10,-10){1}{\vector(-1,1){8}}

              \multiput(-1,19)(10,10){1}{\vector(-1,-1){8}}
              \multiput(-1,-1)(10,10){2}{\vector(-1,-1){8}} 
              \multiput(9,-11)(10,10){1}{\vector(-1,-1){8}}
              \qbezier[12](-10,10)(0,10)(10,10)
              \qbezier[12](-10,-10)(0,-10)(10,-10)
            }     
          \end{picture}
        \end{center}
        \caption{}\label{f:7}
      \end{figure}

By the same kind of argument as in the previous cases we see that the
indecomposable modules for this quiver with relations and with 
dimension vectors:\quad

      \begin{equation}\label{v:7}
        u_1=
        \begin{array}{c}
          1 \\
          1\phantom{,,,}1 \\
          1 \\
          1\phantom{,,,}1\\
          1
        \end{array},\text{\quad and\quad} 
        v_1=
        \begin{array}{c}
          0 \\
          0\phantom{,,,}0\\
          1 \\
          0\phantom{,,,}0\\
          0
        \end{array}
      \end{equation}
are simple regular and belong to the same tube of rank $2$. One point extension
with these two modules yields the following tubular algebra of type $(4,4,2)$:

      \begin{center}
        \setlength{\unitlength}{0.6mm}
        \begin{picture}(120,50)
          \put(35,25){         
            \put(-10,0){\circle*{2}} 
            \put(10,0){\circle*{2}}
            \put(0,10){\circle*{2}}
            \put(-20,10){\circle*{2}}
            \put(-10,20){\circle*{2}}
            \put(0,-10){\circle*{2}}
            \put(-20,-10){\circle*{2}}
            \put(-10,-20){\circle*{2}}
            \put(30,0){\circle*{2}}

            \multiput(-11,1)(10,10){2}{\vector(-1,1){8}}
            \multiput(-1,-9)(10,10){2}{\vector(-1,1){8}}
            \multiput(-11,-19)(10,-10){1}{\vector(-1,1){8}}

            \multiput(-11,19)(10,10){1}{\vector(-1,-1){8}}
            \multiput(-11,-1)(10,10){2}{\vector(-1,-1){8}} 
            \multiput(-1,-11)(10,10){2}{\vector(-1,-1){8}}

            \qbezier[12](-20,10)(-10,10)(0,10)
            \qbezier[12](-20,-10)(-10,-10)(0,-10)
            \qbezier[12](-10,0)(0,0)(10,0)

            \qbezier[22](-20,10)(20,6)(30,0)
            \qbezier[22](-20,-10)(0,-5)(30,0)

            \qbezier(28,1)(10,8)(-7,1.4)
            \put(-2.5,2.9){\vector(-3,-1){6}}
          }     
        \end{picture}
      \end{center}

Now, in the above figure we find the following tame concealed subalgebra, 
which is of tubular type $(3,3,2)$, \cite[pp.~365--366]{RIN}:

      \begin{figure}[ht]
        \begin{center}
          \setlength{\unitlength}{0.6mm}
          \begin{picture}(120,50)
            \put(46,25){
              \put(-20,0){\circle*{2}} 
              \put(0,0){\circle*{2}}
              \put(-10,10){\circle*{2}}
              \put(-20,20){\circle*{2}}
              \put(-10,-10){\circle*{2}}
              \put(-20,-20){\circle*{2}}
              \put(20,0){\circle*{2}}
              \multiput(-11,11)(10,10){1}{\vector(-1,1){8}}
              \multiput(-11,-9)(10,10){2}{\vector(-1,1){8}}
              \multiput(-11,9)(10,10){1}{\vector(-1,-1){8}} 
              \multiput(-11,-11)(10,10){2}{\vector(-1,-1){8}}
              \qbezier(18,1)(0,8)(-17,1.4)
              \put(-12.5,2.9){\vector(-3,-1){6}}
              \qbezier[12](-20,0)(-10,0)(0,0)
            }     
          \end{picture}
        \end{center}
        \caption{}\label{f:8}
      \end{figure}

The indecomposable modules with dimension vectors:

      \begin{equation}\label{v:8}
        u_1=
        \begin{array}{l}
          0 \\
          \phantom{00} 1 \\
          1 \phantom{,00}0\phantom{,00}1\\
          \phantom{00}0\\
          0
        \end{array}\text{\quad and\quad}
        v_1=
        \begin{array}{l}
          0 \\
          \phantom{00} 0 \\
          1 \phantom{,00}0\phantom{,00}1\\
          \phantom{00}1\\
          0
        \end{array}
      \end{equation}
are simple regular in different tubes of rank $3$.

Applying one-point extensions with these modules we obtain, 
by~\cite{RIN}, the following tubular algebra of type $(4,4,2)$:

      \begin{center}
        \setlength{\unitlength}{0.6mm}
        \begin{picture}(120,50)
          \put(55,25){
            \put(-25,0){\circle*{2}} 
            \put(-5,0){\circle*{2}}
            \put(-15,10){\circle*{2}}
            \put(-25,20){\circle*{2}}
            \put(-15,-10){\circle*{2}}
            \put(-25,-20){\circle*{2}}
            \put(25,10){\circle*{2}}
            \put(15,0){\circle*{2}}
            \put(25,-10){\circle*{2}}
            \qbezier(23,11)(5,18)(-12,11.4) 
            \qbezier(13,1)(-5,8)(-22,1.4)
            \multiput(-16,11)(10,10){1}{\vector(-1,1){8}}
            \multiput(-16,-9)(10,10){2}{\vector(-1,1){8}}

            \multiput(-16,9)(10,10){1}{\vector(-1,-1){8}} 
            \multiput(-16,-11)(10,10){2}{\vector(-1,-1){8}}
            \put(-7.5,12.9){\vector(-3,-1){6}}
            \put(-17.5,2.9){\vector(-3,-1){6}}
            \multiput(24,9)(10,10){1}{\vector(-1,-1){8}} 
            \multiput(24,-9)(10,10){1}{\vector(-1,1){8}} 
            \qbezier(23,-11)(5,-18)(-12,-11.4)
            \put(-7.5,-12.9){\vector(-3,1){6}}
            \qbezier[22](-25,0)(-15,10)(25,10)
            \qbezier[12](-25,0)(-15,0)(-5,0)
            \qbezier[22](-25,0)(-5,-10)(25,-10)
            \qbezier[22](-25,20)(15,17)(25,10)
            \qbezier[22](-25,-20)(15,-17)(25,-10)
          }     
        \end{picture}
      \end{center}

Now, in the above figure we find the following tame concealed 
(in fact hereditary) subalgebra, which is of tubular type $(3,3)$,
      \cite[pages~365--366]{RIN}:

      \begin{figure}[ht]
        \begin{center} 
          \setlength{\unitlength}{0.6mm}
          \begin{picture}(120,50)
            \put(60,25){
              \put(-10,0){\circle*{2}} 
              \put(-20,10){\circle*{2}}
              \put(-20,-10){\circle*{2}}
              \multiput(-11,1)(10,10){1}{\vector(-1,1){8}}
              \multiput(-11,-1)(10,10){1}{\vector(-1,-1){8}}
              \put(10,0){\circle*{2}}
              \put(20,10){\circle*{2}}
              \put(20,-10){\circle*{2}}
              \multiput(19,9)(10,10){1}{\vector(-1,-1){8}}
              \multiput(19,-9)(10,-10){1}{\vector(-1,1){8}}
              \qbezier(18,11)(0,18)(-17,11.4) 
              \qbezier(18,-11)(0,-18)(-17,-11.8)
              \put(-12.5,12.9){\vector(-3,-1){6}}
              \put(-12.5,-13.3){\vector(-3,1){6}}
            }     
          \end{picture}
        \end{center}
        \caption{}\label{f:9}
      \end{figure}
      
Consider the modules with dimension vectors:\quad

      \begin{equation}\label{v:9}
        u_1=
        \begin{array}{l}
          1\phantom{000000,}1 \\
          \phantom{00}1\phantom{,0}1\\
          1\phantom{000000,}1
        \end{array}, \quad
        v_1=
        \begin{array}{l}
          0\phantom{000000,}1 \\
          \phantom{00}0\phantom{,0}1\\
          0\phantom{000000,}0
        \end{array}\text{\quad and \quad}
        w_1=
        \begin{array}{l}
          0\phantom{000000,}0 \\
          \phantom{00}0\phantom{,0}1\\
          0\phantom{000000,}1
        \end{array}
      \end{equation}

The indecomposable module with dimension vector $u_1$ (in \ref{v:9}) 
is a simple module in a tube of rank $1$, and the modules with dimension 
vectors $v_1$ and $w_1$ 
(in~\ref{v:9}) are simple regular modules in different tubes of rank $3$. 
Applying one-point extensions with these modules we obtain
the following tubular algebra of type $(4,4,2)$:
      
      \begin{center} 
        \setlength{\unitlength}{0.6mm}
        \begin{picture}(120,50)
          \put(65,25){
            \put(-15,0){\circle*{2}} 
            \put(-25,10){\circle*{2}}
            \put(-25,-10){\circle*{2}}
            \multiput(-16,1)(10,10){1}{\vector(-1,1){8}}
            \multiput(-16,-1)(10,10){1}{\vector(-1,-1){8}}
            \put(25,0){\circle*{2}} 
            \put(5,0){\circle*{2}}
            \put(15,10){\circle*{2}}
            \put(25,20){\circle*{2}}
            \put(15,-10){\circle*{2}}
            \put(25,-20){\circle*{2}}
            \multiput(24,1)(10,10){1}{\vector(-1,1){8}}
            \multiput(14,-9)(10,-10){2}{\vector(-1,1){8}}
            \multiput(14,9)(10,10){2}{\vector(-1,-1){8}}
            \multiput(24,-1)(10,10){1}{\vector(-1,-1){8}} 
            \qbezier(13,11)(-5,18)(-22,11.4) 
            \qbezier(23,-1)(5,-8)(-12,-1.8)
            \qbezier(13,-11)(-5,-18)(-22,-11.8)

            \put(-17.5,12.9){\vector(-3,-1){6}}
            \put(-17.5,-13.3){\vector(-3,1){6}}
            \put(-7.5,-3.3){\vector(-3,1){6}}
            \qbezier[22](-25,10)(-5,10)(25,0)
            \qbezier[12](5,0)(15,0)(25,0)
            \qbezier[22](-25,-10)(15,-10)(25,0)
            \qbezier[22](-25,10)(-15,20)(25,20)
            \qbezier[22](-25,-10)(-15,-20)(25,-20)
          }     
        \end{picture}
      \end{center}

Now, in the above figure we find the following tame concealed subalgebra, 
which is of tubular type $(3,3,2)$, \cite[pp.~365--366]{RIN}:

      \begin{figure}[ht]
        \begin{center} 
          \setlength{\unitlength}{0.6mm}
          \begin{picture}(120,50)
            \put(60,25){
              \put(-20,0){\circle*{2}} 
              \put(20,0){\circle*{2}} 
              \put(0,0){\circle*{2}}
              \put(10,10){\circle*{2}}
              \put(20,20){\circle*{2}}
              \put(10,-10){\circle*{2}}
              \put(20,-20){\circle*{2}}
              \multiput(19,1)(10,10){1}{\vector(-1,1){8}}
              \multiput(9,-9)(10,-10){2}{\vector(-1,1){8}}
              \multiput(9,9)(10,10){2}{\vector(-1,-1){8}}
              \multiput(19,-1)(10,10){1}{\vector(-1,-1){8}} 
              \qbezier(18,-1)(0,-8)(-17,-1.8) 
              \put(-12.5,-3.3){\vector(-3,1){6}}
              \qbezier[12](0,0)(10,0)(20,0)
            }     
          \end{picture}
        \end{center}
        \caption{}\label{f:10}
      \end{figure}

Consider the indecomposable modules with dimension vectors:\quad

      \begin{equation}\label{v:10}
        u_1=
        \begin{array}{r}
          1\\
          1\phantom{0,} \\
          0\phantom{,,,}1\phantom{,,,}1\\
          1\phantom{0,}\\
          0
        \end{array}\text{\quad and \quad}
        v_1=
        \begin{array}{r}
          0\\
          1\phantom{0,} \\
          0\phantom{,,,}1\phantom{,,,}1\\
          1\phantom{0,}\\
          1
        \end{array}
      \end{equation}

They are simple regular modules in different 
tubes of rank $3$. Applying one-point extensions with these modules we obtain
the following tubular algebra of type $(4,4,2)$:

      \begin{center} 
        \setlength{\unitlength}{0.6mm}
        \begin{picture}(120,50)
          \put(65,25){
            \put(-25,0){\circle*{2}}
            \put(15,0){\circle*{2}}
            \put(-5,0){\circle*{2}}

            \put(25,10){\circle*{2}}
            \put(5,10){\circle*{2}}
            \put(15,20){\circle*{2}}
            \put(25,-10){\circle*{2}}
            \put(5,-10){\circle*{2}}
            \put(15,-20){\circle*{2}}
            \multiput(14,1)(10,10){2}{\vector(-1,1){8}}
            \multiput(24,-9)(10,10){1}{\vector(-1,1){8}}
            \multiput(4,-9)(10,-10){2}{\vector(-1,1){8}}

            \multiput(4,9)(10,10){2}{\vector(-1,-1){8}}
            \multiput(14,-1)(10,10){2}{\vector(-1,-1){8}} 
            \multiput(24,-11)(10,10){1}{\vector(-1,-1){8}}
            \qbezier(13,-1)(-5,-8)(-22,-1.8)  
            \put(-17.5,-3.3){\vector(-3,1){6}}  
            \qbezier[12](5,10)(15,10)(25,10)
            \qbezier[12](-5,0)(5,0)(15,0)
            \qbezier[12](5,-10)(15,-10)(25,-10)
            \qbezier[22](-25,0)(5,0)(25,10)
            \qbezier[32](-25,-.5)(-15,-7)(25,-10)
          }     
        \end{picture}
      \end{center}

In the above figure, we find the same subalgebra of tubular type $(4,4,2)$ 
as in Figure \ref{f:7}. So, this Galois covering is an iterated tubular 
algebra in the sense of de la Peña-Tomé~\cite{DELA}.

\subsection{Covering of $\cP(Q^{(4)},W^{(4)})$} We are dealing
with the covering described in Section~\ref{cov:632}. Since it is
similar to the other cases, we skip most of the details, and display only
the relevant sequence of tame concealed and tubular subalgebras.

We start with  a tame concealed algebra $A_0$, 
which is  of tubular type $(4,2,2)$, see Figure~\ref{f:12}.
      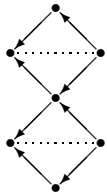
\begin{figure}[ht]
        \begin{center} 
          \setlength{\unitlength}{0.6mm}
          \begin{picture}(120,50)
            \put(35,25){
              \put(-5,20){\circle*{2}}
              \put(-15,10){\circle*{2}}
              \put(5,10){\circle*{2}}
              \put(-5,0){\circle*{2}}
              \put(-15,-10){\circle*{2}}
              \put(5,-10){\circle*{2}}
              \put(-5,-20){\circle*{2}}
              \multiput(-6,-1)(0,20){2}{\vector(-1,-1){8}}
              \multiput(4,-11)(0,20){2}{\vector(-1,-1){8}}
              \multiput(-6,1)(0,-20){2}{\vector(-1,1){8}}
              \multiput(4,-9)(0,20){2}{\vector(-1,1){8}} 
              \qbezier[12](-15,10)(-5,10)(5,10)
              \qbezier[12](-15,-10)(-5,-10)(5,-10)
            }     
          \end{picture}
        \end{center}
        \caption{$A_0$ tame concealed of tubular type $(4,4,2)$} \label{f:12}
      \end{figure}
$A_0$ admits a tubular extension $\Lam_1$  of type $(6,3,2)$, 
see Figure~\ref{f:13}.
     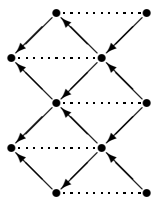
\begin{figure}[ht]
      \begin{center} 
        \setlength{\unitlength}{0.6mm}
        \begin{picture}(120,50)
          \put(35,25){
            \put(-5,0){\circle*{2}}
            \put(15,20){\circle*{2}}
            \put(15,0){\circle*{2}}
            \put(5,10){\circle*{2}}
            \put(-15,10){\circle*{2}}
            \put(-5,20){\circle*{2}}
            \put(5,-10){\circle*{2}}
            \put(-15,-10){\circle*{2}}
            \put(-5,-20){\circle*{2}}
            \put(15,-20){\circle*{2}}
            \multiput(-6,-1)(0,20){2}{\vector(-1,-1){8}}
            \multiput(4,-11)(0,20){2}{\vector(-1,-1){8}}
            \multiput(14,-1)(0,20){2}{\vector(-1,-1){8}}
            \multiput(-6,1)(0,-20){2}{\vector(-1,1){8}}
            \multiput(4,-9)(0,20){2}{\vector(-1,1){8}} 
            \multiput(14,1)(0,-20){2}{\vector(-1,1){8}}
            \qbezier[12](-5,20)(5,20)(15,20)
            \qbezier[12](-15,10)(-5,10)(5,10)
            \qbezier[12](-5,0)(5,0)(15,0)
            \qbezier[12](-15,-10)(-5,-10)(5,-10)
            \qbezier[12](-5,-20)(5,-20)(15,-20)
          }     
        \end{picture}
      \end{center}
      \caption{$\Lam_1$ tubular of type $(6,3,2)$} \label{f:13}
     \end{figure}
Now, $\Lam_1$ is a tubular coextension of a tame concealed subalgebra $A_1$,  
which is of tubular type $(4,3,2)$, see Figure~\ref{f:14}.
      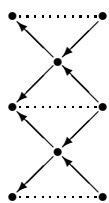
\begin{figure}[ht]
        \begin{center} 
          \setlength{\unitlength}{0.6mm}
          \begin{picture}(120,50)
            \put(35,25){
              \put(-5,20){\circle*{2}}
              \put(15,20){\circle*{2}}
              \put(5,10){\circle*{2}}
              \put(-5,0){\circle*{2}}
              \put(15,0){\circle*{2}}
              \put(5,-10){\circle*{2}}
              \put(-5,-20){\circle*{2}}
              \put(15,-20){\circle*{2}}
              \multiput(4,-11)(0,20){2}{\vector(-1,-1){8}}
              \multiput(14,-1)(0,20){2}{\vector(-1,-1){8}}
              \multiput(4,-9)(0,20){2}{\vector(-1,1){8}} 
              \multiput(14,1)(0,-20){2}{\vector(-1,1){8}}
              \qbezier[12](-5,20)(5,20)(15,20) 
              \qbezier[12](-5,0)(5,0)(15,0)
              \qbezier[12](-5,-20)(5,-20)(15,-20)        
            }     
          \end{picture}
        \end{center} 
        \caption{$A_1$ tame concealed of tubular type $(4,3,2)$} \label{f:14}
      \end{figure}
Next, $A_1$ admits the tubular extension $\Lam_2$ of of type $(6,3,2)$,
see Figure~\ref{f:15}.
     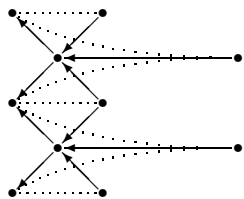
\begin{figure}[ht]
      \begin{center} 
        \setlength{\unitlength}{0.6mm}
        \begin{picture}(120,60)
          \put(35,30){
            \put(-5,20){\circle*{2}}
            \put(15,20){\circle*{2}}
            \put(5,10){\circle*{2}}
            \put(-5,0){\circle*{2}}
            \put(15,0){\circle*{2}}
            \put(5,-10){\circle*{2}}
            \put(-5,-20){\circle*{2}}
            \put(15,-20){\circle*{2}}
            \multiput(4,-11)(0,20){2}{\vector(-1,-1){8}}
            \multiput(14,-1)(0,20){2}{\vector(-1,-1){8}}
            \multiput(4,-9)(0,20){2}{\vector(-1,1){8}} 
            \multiput(14,1)(0,-20){2}{\vector(-1,1){8}}
            \put(45,10){\circle*{2}} 
            \put(45,-10){\circle*{2}}
            \multiput(43.5,-10)(0,20){2}{\vector(-1,0){37}} 
            \qbezier[12](-5,20)(5,20)(15,20) 
            \qbezier[12](-5,0)(5,0)(15,0)
            \qbezier[12](-5,-20)(5,-20)(15,-20)         
            \qbezier[22](-5,20)(10,10)(45,10)
            \qbezier[22](-5,0)(10,10)(45,10)
            \qbezier[22](-5,0)(10,-10)(45,-10)
            \qbezier[22](-5,-20)(10,-10)(45,-10)
          }     
        \end{picture}
      \end{center}
      \caption{$\Lam_2$ tubular of type $(6,3,2)$} \label{f:15}
      \end{figure}
$\Lam_2$ is a tubular coextension of the  tame concealed 
subalgebra $A_2$,  which is of tubular type $(4,2,2)$, see Figure~\ref{f:16}.

      \begin{figure}[ht]
        \begin{center} 
          \setlength{\unitlength}{0.6mm}
          \begin{picture}(120,50)
            \put(35,25){
              \put(15,20){\circle*{2}}
              \put(5,10){\circle*{2}}
              \put(15,0){\circle*{2}}
              \put(5,-10){\circle*{2}}
              \put(15,-20){\circle*{2}}
              \multiput(14,-1)(0,20){2}{\vector(-1,-1){8}}
              \multiput(14,1)(0,-20){2}{\vector(-1,1){8}}
              \put(45,10){\circle*{2}}
              \put(45,-10){\circle*{2}}
              \multiput(43.5,-10)(0,20){2}{\vector(-1,0){37}}
            }     
          \end{picture}
        \end{center}
        \caption{$A_2$ tame concealed of type $(4,2,2)$} \label{f:16}
      \end{figure}
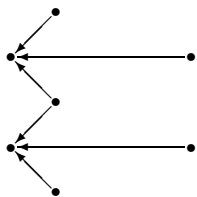
$A_2$ admits a tubular extension $\Lam_3$ of type $(6,3,2)$, 
see Figure~\ref{f:17}.
      \begin{figure}[ht]
      \begin{center} 
        \setlength{\unitlength}{0.6mm}
        \begin{picture}(120,50)
          \put(35,25){
            \put(15,20){\circle*{2}}
            \put(5,10){\circle*{2}}
            \put(15,0){\circle*{2}}
            \put(5,-10){\circle*{2}}
            \put(15,-20){\circle*{2}}
            \multiput(14,-1)(0,20){2}{\vector(-1,-1){8}}
            \multiput(14,1)(0,-20){2}{\vector(-1,1){8}}
            \put(55,20){\circle*{2}}         
            \put(45,10){\circle*{2}}
            \put(55,0){\circle*{2}}
            \put(45,-10){\circle*{2}}
            \put(55,-20){\circle*{2}}
            \multiput(54,-1)(0,20){2}{\vector(-1,-1){8}}
            \multiput(54,1)(0,-20){2}{\vector(-1,1){8}}

            \multiput(53.5,0)(0,20){2}{\vector(-1,0){37}} 
            \multiput(43.5,-10)(0,20){2}{\vector(-1,0){37}}
            \multiput(53.5,-20)(0,0){2}{\vector(-1,0){37}}

            \qbezier[26](5,10)(30,15)(55,20)
            \qbezier[26](5,10)(30,5)(55,0)
            \qbezier[26](5,-10)(30,-5)(55,0)
            \qbezier[26](5,-10)(30,-15)(55,-20)
          }     
        \end{picture}
      \end{center}
      \caption{$\Lam_3$ tubular of type $(6,3,2)$} \label{f:17}
      \end{figure}

$\Lam_3$ is tubular co-extension of the tame concealed subalgebra $A_3$ 
which is of tubular type $(4,3,2)$, see Figure~\ref{f:18}.
      \begin{figure}[ht]
        \begin{center} 
          \setlength{\unitlength}{0.6mm}
          \begin{picture}(120,50)
            \put(70,25){
              \put(-20,20){\circle*{2}}
              \put(-20,0){\circle*{2}}
              \put(-20,-20){\circle*{2}}

              \put(20,20){\circle*{2}}
              \put(10,10){\circle*{2}}
              \put(20,0){\circle*{2}}
              \put(10,-10){\circle*{2}}
              \put(20,-20){\circle*{2}}
              \multiput(19,-1)(0,20){2}{\vector(-1,-1){8}}
              \multiput(19,1)(0,-20){2}{\vector(-1,1){8}}
              \multiput(18.5,0)(0,20){2}{\vector(-1,0){37}} 
              \multiput(18.5,-20)(0,0){2}{\vector(-1,0){37}}
            }     
          \end{picture}
        \end{center}
        \caption{$A_3$ tame concealed of tubular type $(4,3,2)$} \label{f:18}
      \end{figure}
$A_3$ admits a tubular extension $\Lam_4$ of type $(6,3,2)$, 
see Figure~\ref{f:19}.

     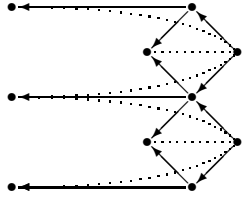
\begin{figure}[ht]
      \begin{center} 
        \setlength{\unitlength}{0.6mm}
        \begin{picture}(120,50)
          \put(35,25){
            \put(15,20){\circle*{2}}
            \put(15,0){\circle*{2}}
            \put(15,-20){\circle*{2}}

            \put(55,20){\circle*{2}}
            \put(45,10){\circle*{2}}
            \put(65,10){\circle*{2}}
            \put(55,0){\circle*{2}}
            \put(45,-10){\circle*{2}}
            \put(65,-10){\circle*{2}}
            \put(55,-20){\circle*{2}}
            \multiput(54,-1)(0,20){2}{\vector(-1,-1){8}}
            \multiput(64,-11)(0,20){2}{\vector(-1,-1){8}}
            \multiput(54,1)(0,-20){2}{\vector(-1,1){8}}
            \multiput(64,-9)(0,20){2}{\vector(-1,1){8}} 

            \multiput(53.5,0)(0,20){2}{\vector(-1,0){37}} 
            \multiput(53.5,-20)(0,0){2}{\vector(-1,0){37}}

            \qbezier[12](45,10)(55,10)(65,10)
            \qbezier[12](45,-10)(55,-10)(65,-10)

            \qbezier[26](15,20)(55,20)(65,10)
            \qbezier[26](15,0)(55,0)(65,10)
            \qbezier[26](15,0)(55,0)(65,-10)
            \qbezier[26](15,-20)(55,-20)(65,-10)
          }     
        \end{picture}
      \end{center}
      \caption{$\Lam_4$ tubular of type $(6,3,2)$} \label{f:19}
      \end{figure}
Finally, $\Lam_4$ is a cotubular extension of  a subalgebra $A_4$ of type 
$(4,2,2)$ which is equivalent to $A_0$ and we can repeat the sequence of
extensions.  We conclude that this covering is iterated tubular.

\section{Proof of Theorem~\ref{thm:main}} 
First, we observe the following consequence of our considerations:
Each of Jacobian algebras associated to the quivers with potentials 
$(Q^{(1)},W^{(1)})$, $(Q^{(2)},W^{(2)}_\lam)$, $(Q^{(3)},W^{(3)})$ and $(Q^{(4)},W^{(4)})$
is tame of polynomial growth.

Indeed, from Section~\ref{sec:ItTubCov} we know that each of these algebras
has a Galois covering  which is an iterated tubular algebra. In particular
these coverings are tame of polynomial growth  
by~\cite[Section 2.4]{DELA}, see also~\ref{ssec:TubIt}. 
Moreover, each of these Galois coverings is defined in terms of a free 
$\mathbb{Z}$-action. Thus, by Theorem~\ref{DS}, our claim follows.

Now, by~Proposition~\ref{prp:tubjac}, the endomorphism
ring of each basic cluster tilting object in a tubular cluster category
is given by a non-degenerate potential, and 
for a fixed tubular type all these algebras are connected via finite sequences of
QP mutations. By~\cite[Thm.~3.6]{GeLaSc} and the above 
observation it follows now that all these algebras
are tame of polynomial growth. Note, that for a single mutation
under the corresponding mutation operation for 
representations~\cite[Sec.~10]{DWZ1} only one component of the dimension vector 
changes. Moreover, it is easy to write down an upper bound for this new 
component which depends linearly on the dimension vector. With this observation
it is straight forward to sharpen the proof of~\cite[Thm.~3.6]{GeLaSc} 
in the sense that QP-mutation preserves even polynomial growth for tame 
algebras. This concludes the proof of Theorem~\ref{thm:main}.

\end{document}